  \newtheorem{Th}{Theorem}[section]
  \newtheorem{Prop}[Th]{Proposition}
  \newtheorem{Lem}[Th]{Lemma}
  \newtheorem{Cor}[Th]{Corollary}
  \theoremstyle{definition}
  \newtheorem{Rem}[Th]{Remark}
  \theoremstyle{plain}
\begin{document}
\begin{center}
{\Large{\bf Periodic solutions for nonlinear evolution equations \\ at resonance}}
\\[10mm]
Piotr Kokocki \footnote{\parbox[t]{100mm}{Corresponding author. \\
{\em E-mail address:} p.kokocki@mat.umk.pl.} \\[2mm]
The researches supported by the MNISzW Grant no. N N201 395137 \\
2010 Mathematical Subject Classification:  47J35, 35B10, 37L05 \\
Key words: semigroup, evolution equation, topological degree, periodic solution, resonance} \\[2mm]

{\em Faculty of Mathematics and Computer Science\\
Nicolaus Copernicus University\\
ul. Chopina 12/18, 87-100 Toru\'n, Poland }\\[18mm]

\end{center}

\begin{abstract}
We are concerned with periodic problems for nonlinear evolution equations at resonance of the form $\dot u(t) = - A u(t) + F (t,u(t))$, where a densely defined linear operator $A\colon D(A)\to X$ on a Banach space $X$ is such that $-A$ generates a compact $C_0$ semigroup and $F\colon [0,+\infty)\times X \to X$ is a nonlinear perturbation. Imposing appropriate Landesman--Lazer type conditions on the nonlinear term $F$, we prove a formula expressing the fixed point index of the associated translation along trajectories operator, in the terms of a time averaging of $F$ restricted to $\mathrm{Ker} \, A$. By the formula, we show that the translation operator has a nonzero fixed point index and, in consequence, we conclude that the equation admits a periodic solution.
\end{abstract}

\section{Introduction}

Consider a periodic problem
\begin{equation}\label{A1}
\left\{\begin{array}{ll} \dot u(t) = - A u(t) + F (t,u(t)), & \qquad t > 0 \\
u(t) = u(t + T) & \qquad t \ge 0,
\end{array}\right.
\end{equation}
where $T > 0$ is a fixed period, $A\colon D(A)\to X$ is a linear operator such that $-A$ generates a $C_0$ semigroup of bounded linear operators on a Banach space $X$ and $F\colon [0,+\infty)\times X \to X$ is a continuous mapping. The periodic problems are the abstract formulations of many differential equations including the parabolic partial differential equations on an open set $\Omega\subset\mathbb{R}^n$, with smooth boundary, of the form
\begin{equation}
\left\{ \begin{array}{ll}
u_t = - \mathcal{A} u + f(t, x, u) & \qquad\text{in}\quad  (0,+\infty)\times\Omega \\
\mathcal{B} u = 0 & \qquad\text{on}\quad [0,+\infty)\times\partial\Omega \\
u(t,x) = u(t + T,x) & \qquad\text{in}\quad [0,+\infty)\times\Omega,
\end{array} \right.
\end{equation}
where $$\mathcal{A}u = - D_i(a^{ij} D_j u) + a^k D_k u + a_0 u$$ is such that $a^{ij} = a^{ji}\in C^1(\overline\Omega)$, $a^k, a_0\in C(\overline\Omega)$,
$$a^{ij}(x)\xi_i \xi_j \ge \theta |\xi|^2 \qquad\text{for}\quad \xi = (\xi_1,\xi_2, \ldots, \xi_n)\in \mathbb{R}^n, \quad x\in\Omega,$$ $\mathcal{B}$ stands for the Dirichlet or Neumann boundary operator and $f\colon [0,+\infty)\times\Omega\times\mathbb{R}\to \mathbb{R}$ is a continuous mapping. \\
\indent Given $x\in X$, let $u(t;x)$ be a (mild) solution of $$\dot u(t) = - A u(t) + F (t,u(t)), \qquad t > 0$$ such that $u(0;x) = x$. We look for the $T$-periodic solutions of (\ref{A1}) as the fixed points of the translation along trajectory operator $\Phi_T \colon X \to X$ given by $\Phi_T(x) := u(T;x)$. \\
\indent One of the effective methods used to prove the existence of the fixed points of $\Phi_T$ is
\emph{the averaging principle} involving the equations
\begin{equation}\label{A2}
\dot u(t) = - \lambda A u(t) + \lambda F (t,u(t)), \qquad t > 0
\end{equation}
where $\lambda > 0$ is a parameter. Let $\Theta^\lambda_T\colon X \to X$ be the translation operator for (\ref{A2}). It is clear that $\Phi_T = \Theta^1_T$. Define the mapping $\widehat{F}\colon X \to X$ by $\widehat{F}(x) := \frac{1}{T}\int_0^TF(s,x) \, d s$ for $x\in X$. The averaging principle says that for every open bounded set $U\subset X$ such that $0 \notin (-A + \widehat{F})(D(A)\cap \partial U)$, one has that $\Theta^\lambda_T(x)\neq x$ for $x\in\partial U$ and
$$\deg(I - \Theta^\lambda_T, U) = \deg(-A + \widehat{F}, U)$$ provided $\lambda > 0$ is sufficiently small. In the above formula $\deg$ stands for the appropriate topological degree. Therefore, if $\deg(-A + \widehat{F}, U)\neq 0$, then using suitable \emph{a priori} estimates and the continuation argument, we infer that $\Theta^1_T$ has a fixed point and, in consequence, (\ref{A1}) admits a periodic solution starting from $\overline U$. The averaging principle for periodic problems on finite dimensional manifolds was studied in \cite{Furi-Pera}. The principle for the equations on any Banach space has been recently considered in \cite{Cwiszewski-1} in the case when $-A$ generates a compact $C_0$ semigroup and in \cite{Cwiszewski-2} for $A$ being an \emph{m}-accretive operator. In \cite{Cwiszewski-Kokocki}, a similar results were obtained when $-A$ generates a semigroup of contractions and $F$ is condensing. For the results when the operator $A$ is replaced by a time-dependent family $\{A(t)\}_{t \ge 0}$ see \cite{Cwisz-Kok-2}. \\
However there are examples of equations where the averaging principle in the above form is not applicable. Therefore, in this paper, motivated by \cite{Brez-Nir}, \cite{Amb-Man}, \cite{hess} and \cite{Schi-Schm}, we use the method of translation along trajectories operator to derive its counterpart in the particular situation when the equation (\ref{A1}) is at resonance i.e., $\mathrm{Ker}\, A \neq 0$ and $F$ is bounded. Let $N:=\mathrm{Ker} \, A$ and assume that the $C_0$ semigroup $\{S_A(t)\}_{t\ge 0}$ generated by $-A$ is compact. Then it is well known that (real) eigenvalues of $S_A(T)$ make a sequence which is either finite or converges to $0$ and the algebraic multiplicity of each of them is finite. Denote by $\mu$ the sum of the algebraic multiplicities of eigenvalues of $S_A(T)\colon X\to X$ lying in $(1,+\infty)$. Since the semigroup is compact, the operator $A$ has compact resolvents and, in consequence, $\dim N < +\infty$. Let $M$ be a subspace of $X$ such that $N\oplus M = X$ with $S_A(t)M \subset M$ for $t\ge 0$. Define a mapping $g\colon N \to N$ by
\begin{equation}\label{ggg1}
g(x) := \int_0^T PF(s,x)\,d s \qquad\mathrm{for}\quad x\in N
\end{equation}
where $P\colon X \to X$ is a topological projection onto $N$ with $\mathrm{Ker}\, P = M$. \\
\indent First, we are concerned with an equation $$\dot u(t) = - A u(t) + \varepsilon F (t,u(t)), \qquad t > 0$$ where $\varepsilon\in [0,1]$ is a parameter. Denoting by $\Phi^\varepsilon_t\colon X \to X$ the translation along trajectory operator associated with this equation, we shall show that, if $V\subset M$ is an open bounded set, with $0\in V$ and $U\subset N$ is an open bounded set in $N$ such that $g(x) \neq 0$ for $x$ from the boundary $\partial_N U$ of $U$ in $N$, then for small $\varepsilon \in (0,1)$, $\Phi^\varepsilon_T(x) \neq x$ for $x\in\partial (U\oplus V)$ and
\begin{equation}\label{ddegg}
\mathrm{deg_{LS}}(I - \Phi_T^{\varepsilon}, U\oplus V) = (-1)^{\mu + \dim N}\mathrm{deg_B}(g, U).
\end{equation}
Here $\mathrm{deg_{LS}}$ and $\mathrm{deg_B}$ stand for the Leray--Schauder and Brouwer degree, respectively. The obtained result improves that from \cite{Schi-Schm}. \\
\indent Further, for an open and bounded set $\Omega\subset \mathbb{R}^n$, we shall use the formula (\ref{ddegg}) to study the periodic problem
\begin{equation}\label{A-k-F}
\left\{\begin{array}{ll} \dot u(t) = - A u(t) + \lambda u(t) + F (t,u(t)), & \qquad t > 0 \\
u(t) = u(t + T) & \qquad t \ge 0,
\end{array}\right.
\end{equation}
where $A\colon D(A) \to X$ is a linear operator on the Hilbert space $X:=L^2(\Omega)$ with a real eigenvalue $\lambda$ and $F\colon [0,+\infty) \times X \to X$ is a continuous mapping. As before we assume that $-A$ generates a compact $C_0$ semigroup $\{S_A(t)\}_{t\ge 0}$ on $X$. The mapping $F$ is associated with a bounded and continuous $f\colon [0,+\infty) \times \Omega \times \mathbb{R} \to \mathbb{R}$ as follows
\begin{equation}\label{row3}
F(t,u)(x) := f(t,x,u(x)) \qquad\mathrm{for}\quad t\in [0,+\infty),\quad x\in\Omega.
\end{equation}
Additionally we suppose that the following kernel coincidence holds true (which is more general than to assume that A is self-adjoint)
$$N_\lambda:= \mathrm{Ker}\,(A - \lambda I) = \mathrm{Ker}\,(A^* - \lambda I) = \mathrm{Ker}\,(I - e^{\lambda T}S_A(T)).$$
Let $\Psi_t\colon  X \to X$ be the translation along trajectories operator associated with the equation
$$\dot u(t) = - A u(t) + \lambda u(t) + F (t,u(t)), \qquad t > 0.$$
The formula (\ref{ddegg}), under suitable Landesman--Lazer type conditions introduced in \cite{Lad-Laz}, gives an effective criterion for the existence of $T$-periodic solutions of (\ref{A-k-F}). Namely, we prove that there is an $R>0$ such that $g(x) \neq 0$ for $x\in N_\lambda \setminus B(0,R)$, $\Psi_T(x)\neq x$ for $x\in X\setminus B(0,R)$ and
\begin{equation}\label{ind12}
\mathrm{deg_{LS}}(I - \Psi_T, B(0,R)) = (-1)^{\mu(\lambda) + \dim N_\lambda}\,\mathrm{deg_B}(g, B(0,R)\cap N_\lambda )
\end{equation}
where $\mu(\lambda)$ is the sum of the algebraic multiplicities of the eigenvalues of $e^{\lambda T} S_{A}(T)$ lying in $(1,+\infty)$ and $g\colon N_\lambda  \to N_\lambda $ is given by (\ref{ggg1}) with $P$ being the orthogonal projection on $N_\lambda $. Additionally, we compute $\mathrm{deg_B}(g, B(0,R)\cap N_\lambda )$, which may be important in the study of problems concerning to the multiplicity of periodic solutions. Obtained applications correspond to those from \cite{Brez-Nir}, \cite{hess}, where a different approach were used to prove the existence of periodic solutions for parabolic equations at resonance. For the results concerning hyperbolic equations see e.g. \cite{Cwiszewski-3}, \cite{Ces-Kann}, \cite{Fuc-Maw}

\emph{Notation and terminology.} Throughout the paper we use the following notational conveniences.
If $(X, \|\cdot\|)$ is a normed linear space, $Y\subset X$ is a subspace and $U\subset Y$ is a subset, then by $\mathrm{cl}_{\,Y} \, U$ and $\partial_{\,Y} \, U$ we denote the closure and boundary of $U$ in $Y$, respectively, while by $\mathrm{cl} \, U \ (\overline U)$  and $\partial \, U$ we denote the closure and boundary of $U$ in $X$, respectively. If $Z$ is a subspace of $X$ such that $X = Y\oplus Z$, then for subsets $U\subset Y$ and $V\subset Z$ we write $U\oplus V:=\{x + y \ | \ x\in U, \ y\in V\}$ for their algebraic sum. We recall also that a $C_0$ semigroup $\{S(t)\colon X \to X\}_{t\ge 0}$ is compact if $S(t) V$ is relatively compact for every bounded $V\subset X$ and
$t > 0$. \\

\section{Translation along trajectories operator}

Consider the following differential problem
\begin{equation}\label{A-F-lam}
\left\{\begin{array}{ll}
\dot u(t) = - A u(t) + F (\lambda,t,u(t)), & \quad t > 0 \\
u(0) = x
\end{array}\right.
\end{equation}
where $\lambda$ is a parameter from a metric space $\Lambda$, $A\colon D(A) \to X$ is a linear operator on a Banach space $(X, \|\cdot\|)$ and $F\colon \Lambda\times [0,+\infty)\times X \to X$ is a continuous mapping. In this section $X$ is assumed to be real, unless otherwise stated. Suppose that $-A$ generates a compact $C_0$ semigroup $\{S_A(t)\}_{t\ge 0}$ and the mapping $F$ is such that \\[2mm]
\makebox[10mm][r]{(F1)} \parbox[t]{138mm}{for any $\lambda \in \Lambda$ and $x_0\in X$ there is a neighborhood $V \subset X$ of $x_0$ and a constant $L > 0$ such that for any $x,y\in V$
$$\|F(\lambda,t,x) - F(\lambda, t, y)\|\le L\|x - y\| \qquad\mathrm{for}\quad t\in[0,+\infty);$$} \\[1mm]
\makebox[10mm][r]{(F2)} \parbox[t]{138mm}{there is a continuous function $c\colon [0,+\infty) \to [0,+\infty)$ such that
    \[\|F(\lambda, t, x)\| \le c(t)(1 + \|x\|) \qquad\mathrm{for}\quad \lambda\in \Lambda, \quad t\in [0,+\infty),\quad x\in X. \]} \\
\indent A mild solution of the problem (\ref{A-F-lam}) is, by definition, a continuous mapping $u\colon [0, +\infty) \to X$ such that
\begin{equation*}
u(t) = S_A(t)x + \int_{0}^tS_A(t - s)F(\lambda,s,u(s))\,d s \qquad\text{ for }\quad t\ge 0.
\end{equation*}
It is well known (see e.g. \cite{Pazy}) that for any $\lambda\in\Lambda$ and $x\in X$, there is unique mild solution $u(\,\cdot \, ; \lambda, x)\colon [0,+\infty) \to X$ of (\ref{A-F-lam}) such that $u(0; \lambda, x) = x$ and therefore, for any $t\ge 0$, one can define \emph{the translation along trajectories operator} $\Phi_t\colon \Lambda\times X \to X$ by \[\Phi_t(\lambda, x) := u(t \, ;\lambda, x) \qquad\text{for}\quad \lambda\in\Lambda, \quad x\in X.\]
As we need the continuity and compactness of $\Phi_t$, we recall the following
\begin{Th}\label{tw-exi-con-comp2}
Let $A\colon D(A) \to X$ be a linear operator such that $-A$ generates a compact $C_0$ semigroup and let $F\colon \Lambda\times [0,+\infty)\times X \to X$ be a continuous mapping such that conditions (F1) and (F2) hold. \\[2mm]
\makebox[10mm][r]{(a)} \parbox[t]{138mm}{If sequences $(\lambda_n)$ in $\Lambda$ and $(x_n)$ in $X$ are such that $\lambda_n \to \lambda_0$ and $x_n \to x_0$, as $n\to +\infty$, then
\begin{equation*}
u(t ; \lambda_n, x_n) \to u(t ; \lambda_0, x_0) \qquad\mathrm{as}\quad n\to +\infty,
\end{equation*}
uniformly for $t$ from bounded intervals in $[0,+\infty)$.}\\[1mm]
\makebox[10mm][r]{(b)} \parbox[t]{138mm}{For any $t > 0$, the operator $\Phi_t\colon \Lambda\times X \to X$ is completely continuous, i.e. $\Phi_t (\Lambda\times V)$ is relatively compact, for any bounded $V\subset X$. }
\end{Th}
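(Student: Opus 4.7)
Both parts rest on a uniform a priori bound. Applying (F2) to the mild formula
\[
u(t; \lambda_n, x_n) = S_A(t) x_n + \int_0^t S_A(t-s) F(\lambda_n, s, u(s; \lambda_n, x_n)) \, ds
\]
and invoking Gronwall's inequality yields a constant $K = K(t_0)$ with $\|u(s; \lambda_n, x_n)\| \le K$ for all $n \ge 0$ and $s \in [0, t_0]$, whence also $\|F(\lambda_n, s, u(s; \lambda_n, x_n))\| \le C$ for some $C = C(t_0)$. Because (F2) is uniform in $\lambda$, the same bound persists uniformly over $(\lambda, x) \in \Lambda \times V$ for any bounded $V \subset X$.

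For part (a), my plan is to extract a convergent subsequence by compactness and identify the limit through uniqueness. Using the compactness of $\{S_A(t)\}_{t \ge 0}$, the norm-continuity of $t \mapsto S_A(t)$ on $(0, +\infty)$, and the splitting $\int_0^t = \int_0^{t-\epsilon} + \int_{t-\epsilon}^t$ of the mild integral, one obtains pointwise relative compactness of $\{u(s; \lambda_n, x_n)\}_n$ for each $s > 0$ together with equicontinuity on $[0, t_0]$ (equicontinuity at $t = 0$ uses $x_n \to x_0$ and strong continuity of the semigroup at $x_0$). Arzel\`a--Ascoli then furnishes a subsequence converging uniformly on $[0, t_0]$ to some $v \in C([0, t_0], X)$. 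Continuity of $F$ together with the uniform bound $C$ allows dominated convergence inside the mild formula, showing that $v$ is a mild solution with parameter $\lambda_0$ and initial value $x_0$. Uniqueness of such a solution, derived from (F1) by a local Gronwall argument in a tube around $v$, forces $v = u(\cdot; \lambda_0, x_0)$, and the standard subsequence principle extends the convergence to the whole sequence.

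For part (b), fix $t > 0$ and a bounded $V \subset X$. Continuity of $\Phi_t$ is immediate from (a). For precompactness, decompose
\[
\Phi_t(\lambda, x) = S_A(t) x + S_A(\epsilon) \int_0^{t-\epsilon} S_A(t - \epsilon - s) F(\lambda, s, u(s; \lambda, x)) \, ds + \int_{t-\epsilon}^t S_A(t-s) F(\lambda, s, u(s; \lambda, x)) \, ds.
\]
The first summand lies in $S_A(t) V$, which is relatively compact since $S_A(t)$ is compact; the middle summand lies in $S_A(\epsilon)$ applied to a norm-bounded set (with bound depending only on $C$, $t$, $\epsilon$ and $\sup_{r \in [0,t]}\|S_A(r)\|$), hence is relatively compact by compactness of $S_A(\epsilon)$; and the tail has norm bounded by a constant multiple of $\epsilon$, uniformly over $(\lambda, x) \in \Lambda \times V$. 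Letting $\epsilon \to 0$ shows that $\Phi_t(\Lambda \times V)$ is totally bounded.

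The principal obstacle sits inside part (a): assumption (F1) provides only a \emph{local} Lipschitz bound whose constant may depend on $\lambda$, so one cannot directly control $\|u(t; \lambda_n, x_n) - u(t; \lambda_0, x_0)\|$ by a single Gronwall estimate over the whole sequence. This is precisely why I organise the proof via compactness and uniqueness rather than a direct continuous-dependence argument; in this route (F1) is needed only to single out the limit $v$ among candidate mild solutions for $(\lambda_0, x_0)$.
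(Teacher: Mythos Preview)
Your proposal is correct and follows essentially the same route as the paper: a Gronwall a~priori bound, the $\int_0^{t-\epsilon}+\int_{t-\epsilon}^t$ splitting to obtain relative compactness, Arzel\`a--Ascoli for pointwise compactness plus equicontinuity, and identification of the limit via the uniqueness coming from (F1). The only cosmetic difference is that the paper establishes the compactness of $\Phi_t(\Lambda\times\Omega)$ first and then feeds it into the equicontinuity argument for~(a), whereas you reverse the order; your closing remark on why (F1) forces the compactness-plus-uniqueness strategy rather than a direct Gronwall estimate is exactly the point.
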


\begin{Rem}
The above theorem is slightly different from Theorem 2.14 in \cite{Cwiszewski-1}, where it is proved in the case when linear operator is dependent on parameter as the mapping $F$, and moreover the parameter space $\Lambda$ is compact. The above theorem says that if $A$ is free of parameters, then compactness of $\Lambda$ may be omitted.
\end{Rem}
Before we start the proof we prove the following technical lemma
\begin{Lem}\label{rem-bounded4}
Let $\Omega\subset X$ be a bounded set. Then \\[1mm]
\makebox[10mm][r]{(a)} \parbox[t]{138mm}{for every $t_0 > 0$ the set $\left\{u(t \ ;\lambda,x) \ | \ t\in [0,t_0], \ \lambda\in\Lambda, \ x\in\Omega\right\}$ is bounded;}
\makebox[10mm][r]{(b)} \parbox[t]{138mm}{for every $t_0 > 0$ and $\varepsilon > 0$ there is $\delta > 0$ such that if $t,t'\in [0,t_0]$ and $0 < t' - t <\delta$, then
\[\left\|\int_t^{t'} S_A(t' - s)F(\lambda, s, u(s ; \lambda, x)) \,ds \right\| \le \varepsilon \qquad\text{for}\quad \lambda\in\Lambda, \quad x\in\Omega;\] }
\makebox[10mm][r]{(c)} \parbox[t]{138mm}{for every $t_0 > 0$ the set
\[S(t_0):=\left\{\int_0^{t_0} S_A(t_0 - s)F(\lambda, s, u(s ; \lambda, x)) \,ds \ \Big| \ \lambda\in\Lambda, \ x\in\Omega\right\}\] is bounded.}
\end{Lem}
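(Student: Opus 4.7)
\textbf{Plan for the proof of Lemma \ref{rem-bounded4}.} The three assertions are all essentially consequences of the mild-solution identity
\[ u(t;\lambda,x) = S_A(t)x + \int_0^t S_A(t-s)F(\lambda,s,u(s;\lambda,x))\,ds \]
together with the linear growth condition (F2) and the uniform boundedness of $\{S_A(t)\}$ on compact time intervals. Since $\{S_A(t)\}_{t\ge 0}$ is a $C_0$ semigroup, I fix $t_0>0$ and set $M := \sup_{t\in[0,t_0]}\|S_A(t)\|<+\infty$, and also $c_0 := \sup_{s\in[0,t_0]} c(s)$ using the continuity of $c$. Let $R>0$ be such that $\|x\|\le R$ for $x\in\Omega$.

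For part (a), the plan is a standard Gronwall argument. Applying $\|\cdot\|$ to the mild-solution identity and using (F2), I obtain
\[ \|u(t;\lambda,x)\| \le MR + Mc_0 t + Mc_0\int_0^t \|u(s;\lambda,x)\|\,ds \qquad\text{for } t\in[0,t_0]. \]
Since the right-hand side is independent of $\lambda\in\Lambda$ and $x\in\Omega$, Gronwall's inequality yields a constant $K_1=K_1(t_0,R)$ with $\|u(t;\lambda,x)\|\le K_1$ for all $t\in[0,t_0]$, $\lambda\in\Lambda$, $x\in\Omega$, which is exactly (a).

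For part (b), once (a) is available, condition (F2) gives the uniform bound
\[ \|F(\lambda,s,u(s;\lambda,x))\| \le c_0(1+K_1) =: K_2 \qquad\text{for } s\in[0,t_0],\ \lambda\in\Lambda,\ x\in\Omega. \]
Combined with $\|S_A(t'-s)\|\le M$ for $s\in[t,t']\subset[0,t_0]$, this yields
\[ \left\|\int_t^{t'} S_A(t'-s)F(\lambda,s,u(s;\lambda,x))\,ds\right\| \le M K_2 (t'-t), \]
and it suffices to take $\delta := \varepsilon/(MK_2+1)$. Part (c) is then the specialisation $t=0$, $t'=t_0$ of this estimate, giving a uniform bound $MK_2 t_0$ on $S(t_0)$.

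There is no real obstacle; the only point requiring mild care is the uniformity in $\lambda\in\Lambda$ of the constants, which holds because the operator $A$ (and hence the semigroup and the bound $M$) does not depend on the parameter, and the growth function $c$ in (F2) is also parameter-independent. This is precisely the feature that distinguishes the present setting from the parameter-dependent version in \cite{Cwiszewski-1} and allows one to dispense with compactness of $\Lambda$.
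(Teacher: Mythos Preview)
Your argument is correct and follows essentially the same route as the paper: a Gronwall estimate based on the mild-solution identity and (F2) for (a), then a direct bound on the integrand using (a) for (b) and (c). The only cosmetic difference is that the paper uses the exponential semigroup bound $\|S_A(t)\|\le Me^{\omega t}$ in place of your supremum $M=\sup_{t\in[0,t_0]}\|S_A(t)\|$, which yields the same conclusions.
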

\begin{proof}
Throughout the proof we assume that the constants $M\ge 1$ and $\omega\in\mathbb{R}$ are such that $\|S_A(t)\|\le Me^{\omega t}$ for $t\ge 0$.
(a) Let $R > 0$ be such that $\Omega\subset B(0,R)$. Then by condition (F2), for every $t\in[0,t_0]$
\begin{align*}
\|u(t;\lambda, x)\| & \le \|S_A(t) x\| + \int_0^t \|S_A(t - s)F(\lambda,s,u(s;\lambda, x))\|\,ds \\
& \le M e^{|\omega| t}\|x\| + \int_0^t M e^{|\omega| (t - s)}c(s)(1 + \|u(s;\lambda, x)\|)\,ds \\
& \le R M e^{|\omega| t_0} + t_0 K M e^{|\omega | t_0} + \int_0^t K M e^{|\omega | t_0}\|u(s;\lambda, x)\|\,ds,
\end{align*}
where $K := \sup_{s\in [0,t_0]}c(s)$. By the Gronwall inequality
\begin{equation}\label{ine-bound}
\|u(t ; \lambda, x)\| \le C_0 e^{t_0 C_1} \qquad\text{for}\quad t\in [0,t_0],\quad \lambda\in\Lambda \quad x\in\Omega,
\end{equation}
where $C_0 := R M e^{|\omega | t_0} + t_0 K M e^{|\omega | t_0}$ and
$C_1 := K M e^{|\omega | t_0}$. \\
(b) From (a) it follows that there is $C > 0$ such that
$\|u(t ; \lambda, x)\| \le C$ for $t\in [0,t_0]$, $\lambda\in\Lambda$ and $x\in\Omega$. Therefore, if $t,t'\in [0,t_0]$ are such that $t < t'$, then
\begin{multline*}
\left\|\int_t^{t'} S_A(t' - s)F(\lambda, s, u(s ; \lambda, x)) \,ds \right\|
\le \int_t^{t'} M e^{\omega (t' - s)}\|F(\lambda, s, u(s ; \lambda, x))\| \,ds \\
\le \int_t^{t'} M e^{|\omega |(t' - s)}c(s)(1 + \|u(s ; \lambda, x)\|) \,ds
= (t' - t) M K e^{|\omega | t_0}(1 +  C).
\end{multline*}
Taking $\delta:= \varepsilon(M K e^{|\omega | t_0}(1 + C))^{-1}$ we obtain the assertion. \\
(c) For any $\lambda\in\Lambda$ and $x\in\Omega$
\begin{multline*}
\left\|\int_0^{t_0} S_A(t_0 - s)F(\lambda, s, u(s ; \lambda, x)) \,ds \right\|
\le \int_0^{t_0} M e^{\omega (t_0 - s)}c(s)(1 + \|u(s ; \lambda, x)\|) \,ds \\
\le \int_0^{t_0} M K e^{|\omega | t_0}(1 +  \|u(s ; \lambda, x))\|) \,ds
\le t_0 M K e^{|\omega | t_0}(1 +  C)
\end{multline*}
and $S(t_0)$ is bounded as claimed.
\end{proof}

\begin{proof}[Proof of Theorem \ref{tw-exi-con-comp2}]Let $\Omega\subset X$ be a bounded set and let $t\in (0,+\infty)$. We shall prove first that the set $\Phi_t(\Lambda\times\Omega)$ is relatively compact. Let $\varepsilon > 0$. For $0 < t_0 < t$, $\lambda\in\Lambda$ and $x\in\Omega$
\begin{align*}
u(t ; \lambda, x) & = S_A(t)x + S_A(t - t_0)\left(\int_0^{t_0} S_A(t_0 - s)F(\lambda, s, u(s ; \lambda, x)) \,ds \right) \\
& \quad + \int_{t_0}^t S_A(t - s)F(\lambda, s, u(s ; \lambda, x)) \,ds,
\end{align*}
and, in consequence,
\begin{multline}\label{c-e-3a}
\{u(t ; \lambda, x) \ | \ \lambda\in\Lambda, \ x\in\Omega \}
\subset S_A(t)\Omega + S_A(t - t_0)D_{t_0} \\
\quad + \left\{\int_{t_0}^t S_A(t_0 - s)F(\lambda, s, u(s ; \lambda, x)) \,ds \ \Big| \ \lambda\in\Lambda, \ x\in\Omega\right\},
\end{multline}
where
\begin{equation*}
D_{t_0} := \left\{\int_0^{t_0} S_A(t_0 - s)F(\lambda, s, u(s ; \lambda, x)) \,ds \ \Big| \ \lambda\in\Lambda, \ x\in\Omega\right\}.
\end{equation*}
Applying Lemma \ref{rem-bounded4} (b), we infer that $t_0\in(0,t)$ may be chosen so that
\begin{equation}\label{c-e-1a}
\left\|\int_{t_0}^t S_A(t - s)F(\lambda, s, u(s ; \lambda, x)) \,ds \right\| \le \varepsilon \qquad\text{for}\quad \lambda\in\Lambda, \quad x\in\Omega.
\end{equation}
From the point (c) of this lemma it follows that $D_{t_0}$ is bounded. Combining (\ref{c-e-3a}) with (\ref{c-e-1a}) yields
\begin{align*}
\Phi_t(\Lambda\times\Omega) = \{u(t ; \lambda, x) \ | \ \lambda\in\Lambda, \ x\in\Omega \} \subset V_\varepsilon + B(0,\varepsilon)
\end{align*}
where $V_\varepsilon := S_A(t)\Omega + S_A(t - t_0)D_{t_0}$.
This implies that $V_\varepsilon$ is relatively compact, since $\{S_A(t)\}_{t\ge 0}$ is a compact semigroup and the sets $\Omega$, $D_{t_0}$ are bounded. On the other hand $\varepsilon > 0$ may be chosen arbitrary small and therefore the set $\Phi_t(\Lambda\times\Omega)$ is also relatively compact. \\
\indent Let $(\lambda_n)$ in $\Lambda$ and $(x_n)$ in $X$ be sequences such that $\lambda_n \to \lambda_0\in\Lambda$ and $x_n \to x_0\in X$. We prove that $u(t ; \lambda_n, x_n) \to u(t ; \lambda_0, x_0)$ as $n\to +\infty$ uniformly on $[0,t_0]$ where $t_0 > 0$ is arbitrary. For every $n\ge 1$ write $u_n := u(\cdot ; \lambda_n, x_n)$. We claim that $(u_n)$ is an equicontinuous sequence of functions. Indeed, take $t\in [0,+\infty)$ and let $\varepsilon > 0$. If $h > 0$ then, by the integral formula,
\begin{align}\label{ja9}
u_n(t + h) - u_n(t) & = S_A(h)u_n(t) - u_n(t) \\ \notag
& \quad + \int_t^{t + h}S_A(t + h - s)F(\lambda_n,s,u_n(s)) \,ds .
\end{align}
Note that for every $t\in [0,+\infty)$ the set $\{u_n(t) \ | \ n\ge 1\}$ is relatively compact as proved earlier. For $t=0$ it follows from the convergence of $(x_n)$, while for $t\in(0,+\infty)$ it is a consequence of the fact that the set $\Phi_t(\Lambda\times \{x_n \ | \ n\ge 1\})$ is relatively compact. From the continuity of semigroup there is $\delta_0 > 0$ such that
\begin{equation}\label{eqc-3}
\|S_A(h)u_n(t) - u_n(t)\|\le \varepsilon /2 \qquad\text{for}\quad h\in (0,\delta_0),\quad n\ge 1.
\end{equation}
By Lemma \ref{rem-bounded4} (b) there is $\delta \in (0,\delta_0)$ such that for $h\in (0,\delta)$ and $n\ge 1$
\begin{equation}\label{eqc-4}
\left\|\int_t^{t + h}S_A(t + h - s)F(\lambda_n,s,u_n(s)) \,ds \right\| \le \varepsilon /2.
\end{equation}
Combining (\ref{ja9}), (\ref{eqc-3}) and (\ref{eqc-4}), for $h\in (0,\delta)$ we infer that,
\begin{align*}
\|u_n(t + h) - u_n(t)\| & \le \|S_A(h)u_n(t) - u_n(t)\| \\ \nonumber
& \quad + \left\|\int_t^{t + h}S_A(t + h - s)F(\lambda_n,s,u_n(s)) \,ds \right\|
\le \varepsilon/2 + \varepsilon/2 = \varepsilon
\end{align*}
for every $n\ge 1$. We have thus proved that $(u_n)$ is right-equicontinuous on $[0,+\infty)$. It remains to show that $(u_n)$ is left-equicontinuous. To this end take $t\in(0,+\infty)$ and $\varepsilon > 0$. If $h$ and $\delta $ are such that $0< h < \delta < t$, then
\begin{align}\label{eqc-6}
\|u_n(t) - u_n(t - h)\| & \le \|u_n(t) - S_A(\delta)u_n(t - \delta)\| \\ \notag
 & \quad + \|S_A(\delta)u_n(t - \delta) - S_A(\delta - h)u_n(t - \delta)\|  \\ \notag
 & \quad + \|S_A(\delta - h)u_n(t - \delta)- u_n(t - h)\|,
\end{align}
and consequently, for any $n\ge 1$,
\begin{align}\label{eqc-10}
\|u_n(t) - u_n(t - h)\| &\le\left\|\int_{t - \delta}^t S_A(t - s)F(\lambda_n,s,u_n(s))\,ds\right\|\\ \notag
 & \quad + \|S_A(\delta)u_n(t - \delta) - S_A(\delta - h)u_n(t - \delta)\| \\ \notag
 & \quad + \left\|\int_{t - \delta}^{t - h}S_A(t - h - s)F(\lambda_n,s,u_n(s)) \,ds \right\|.
\end{align}
By Lemma \ref{rem-bounded4} (b) there is $\delta\in (0,t)$ such that for every $t_1, t_2\in [0,t]$ with $0 < t_1 - t_2 < \delta$, we have
\begin{equation}\label{eqc-7}
\left\|\int_{t_1}^{t_2}S_A(t_2 - s)F(\lambda_n,s,u_n(s)) \,ds \right\| \le \varepsilon /3 \qquad\text{for}\quad  n\ge 1.
\end{equation}
Using again the relative compactness of $\{u_n(t) \ | \ n\ge 1\}$ where $t\in [0,+\infty)$ we can choose  $\delta_1\in (0,\delta)$ such that for every $h\in (0, \delta_1)$ and $n\ge 1$
\begin{equation}\label{eqc-8}
\|S_A(\delta)u_n(t - \delta) - S_A(\delta - h)u_n(t - \delta)\| \le \varepsilon /3.
\end{equation}
Taking into account (\ref{eqc-10}), (\ref{eqc-7}), (\ref{eqc-8}), for $h\in(0,\delta_1)$
\begin{align*}
\|u_n(t) - u_n(t - h)\| & \le \left\|\int_{t - \delta}^t S_A(t - s)F(\lambda_n,s,u_n(s)) \,ds \right\| \\
& \quad + \|S_A(\delta)u_n(t - \delta) - S_A(\delta - h)u_n(t - \delta)\| \\
& \quad + \left\|\int_{t - \delta}^{t - h}S_A(t - h - s)F(\lambda_n,s,u_n(s)) \,ds \right\|
 \le \varepsilon,
\end{align*}
and finally the sequence $(u_n)$ is left-equicontinuous on $(0,+\infty)$. Hence $(u_n)$ is equicontinuous at every $t\in [0,+\infty)$ as claimed.\\
\indent For every $n\ge 1$ write $w_n := {u_n}_{|[0,t_0]}$. We shall prove that $w_n \to w_0$ in $C([0,t_0],X)$ where $w_0 = u(\cdot \ ;\lambda_0,x_0)_{|[0,t_0]}$. It is enough to show that every subsequence of $(w_n)$ contains a subsequence convergent to $w_0$. Let $(w_{n_k})$ be a subsequence of $(w_n)$. Since $(w_{n_k})$ is equicontinuous on $[0,t_0]$ and the set $\{w_{n_k}(s) \ | \ n\ge 1\} = \{u_{n_k}(s) \ | \ n\ge 1\}$ is relatively compact for any $s\in [0,t_0]$, by the Ascoli-Arzela Theorem, we infer that $(w_{n_k})$ has a subsequence $(w_{n_{k_l}})$ such that $w_{n_{k_l}} \to w$ in $C([0,t_0],X)$ as $l\to +\infty$. For every $l\ge 1$ define a mapping $\phi_l:[0,t_0] \to X$ by
$$\phi_l(s) := S_A(t - s)F(\lambda_{n_{k_l}},s,w_{n_{k_l}}(s)).$$
From the continuity of $\{S_A(t)\}_{t\ge 0}$ and $F$, we deduce that $\phi_l \to \phi$ in $C([0,t_0],X)$, where $\phi:[0,t_0] \to X$ is given by $\phi(s) = S_A(t - s)F(\lambda_0,s,w_0(s))$. It is clear that
\begin{align*}
w_{n_{k_l}}(t') = S_A(t')x_0 + \int_0^{t'} \phi_l(s)\,ds \qquad\text{for}\quad t'\in[0,t_0],
\end{align*}
and therefore, passing to the limit with $l\to \infty$, we infer that for $t'\in[0,t_0]$
\begin{align*}
w_0(t') = S_A(t')x_0 + \int_0^{t'} \phi(s)\,ds  = S_A(t')x_0 + \int_0^{t'} S_A(t' - s)F(\lambda_0,s,w_0(s))\,ds.
\end{align*}
By the uniqueness of mild solutions, $w_0(t) = w(t)$ for $t'\in[0,t_0]$ and we conclude that $w_{n_{k_l}} \to w_0 = u(\cdot \ ;\lambda_0,x_0)$ as $l\to \infty$ and finally that $w_n \to w_0$ in $C([0,t_0],X)$. This completes the proof of point (a).
\end{proof}

If linear operator $A:D(A) \to X$ is defined on a complex space $X$, then {\em the point spectrum} of $A$ is the set
$\sigma_p(A) := \{\lambda\in\mathbb{C} \ | \ \text{ there exists } \ z\in X\setminus\{0\} \ \text{ such that } \ \lambda z - A z = 0\}$.
For a linear operator $A$ defined on a real space $X$, we consider its complex point spectrum in the following way (see \cite{Am1} or \cite{Daner}). By the complexification of $X$ we mean a complex linear space $(X_\mathbb{C}, +, \,\cdot\,)$, where $X_\mathbb{C}:= X\times X$, with the operations of addition $+\colon X_\mathbb{C}\times X_\mathbb{C} \to \mathbb{C}$ and multiplication by complex scalars \ $\cdot\,\colon \mathbb{C}\times X_\mathbb{C} \to \mathbb{C}$ given by
\begin{align*}
& (x_1, y_1) + (x_2, y_2) := (x_1 + x_2, y_1 + y_2) && \hspace{-10mm}\qquad\mathrm{for}\quad (x_1, y_1), (x_2, y_2)\in X_\mathbb{C}, \quad \mathrm{and}\\
& (\alpha + \beta i)\cdot (x,y) := (\alpha x - \beta y, \alpha y + \beta x)
&& \hspace{-10mm}\qquad\mathrm{for}\quad \alpha + \beta i \in \mathbb{C},\quad (x,y)\in X_\mathbb{C},
\end{align*}
respectively. For convenience, denote the elements $(x,y)$ of $X_\mathbb{C}$ by $x + yi$. If $X$ is a space with a norm $\|\cdot\|$, then the mapping $\|\cdot\|_\mathbb{C}\colon X_\mathbb{C} \to \mathbb{R}$ given by
\begin{equation*}
\|x + y i\|_\mathbb{C} := \sup_{\theta\in [-\pi, \pi]}\|\sin\theta x + \cos\theta y\|
\end{equation*}
is a norm on $X_\mathbb{C}$, and $(X_\mathbb{C}, \|\cdot\|_\mathbb{C})$ is a Banach space, provided $X$ is so. \emph{The complexification} of $A$ is a linear operator $A_\mathbb{C}\colon D(A_\mathbb{C}) \to X_\mathbb{C}$ given by
$$D(A_\mathbb{C}) := D(A)\times D(A) \quad\text{and}\quad A_\mathbb{C}(x + yi) := Ax + Ayi \qquad\mathrm{for}\quad x + yi \in D(A_\mathbb{C}).$$
Now, one can define the {\em complex point spectrum} of $A$ by $\sigma_p(A):=\sigma_p(A_\mathbb{C})$.
\begin{Rem}
If $-A$ is a generator of a $C_0$ semigroup $\{S_{A}(t)\}_{t\ge 0}$, then it is easy to check that the family $\{S_{A}(t)_\mathbb{C}\}_{t\ge 0}$ of the complexified operators is a $C_0$ semigroup of bounded linear operators on $X_\mathbb{C}$ with the generator $- A_\mathbb{C}$.
\end{Rem}

In the following proposition we mention some spectral properties of $C_0$ semigroups

\begin{Prop}(see \cite[Theorem 16.7.2]{H-F}\label{th-sem-spec})
If $- A$ is the generator of a $C_0$ semigroup $\{S_A(t)\}_{t\ge 0}$ of bounded linear operators on a complex Banach space $X$, then
\begin{equation*}
\sigma_p(S_A(t)) = e^{-t\sigma_p(A)} \setminus \{0\} \qquad\mathrm{for}\quad t > 0.
\end{equation*}
Furthermore, if $\lambda\in\sigma_p(A)$ then for every $t > 0$
\begin{equation}\label{th-sem-1}
\mathrm{Ker}\,(e^{- \lambda t} I - S_A(t)) = \overline{\mathrm{span}}\left(\bigcup_{k\in\mathbb{Z}}\mathrm{Ker}\,(\lambda_{k,t} I - A)\right)
\end{equation}
where $\lambda_{k,t} := \lambda + (2k\pi /t )i$ for $k\in\mathbb{Z}$.
\end{Prop}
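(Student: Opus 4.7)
I would split the proof into the easy direction (eigenvectors of $A$ yield eigenvectors of $S_A(t)$) plus the harder Fourier-averaging direction (eigenvectors of $S_A(t)$ are built from eigenvectors of $A$), handling the kernel description (\ref{th-sem-1}) alongside. For the easy direction, fix $\lambda\in\sigma_p(A)$, any $k\in\mathbb{Z}$, and a nonzero $z$ with $Az=\lambda_{k,t}z$. Both curves $s\mapsto S_A(s)z$ and $s\mapsto e^{-\lambda_{k,t} s}z$ solve the Cauchy problem $\dot v=-Av$, $v(0)=z$, so by uniqueness they coincide. Setting $s=t$ and using $e^{-\lambda_{k,t} t}=e^{-\lambda t}e^{-2k\pi i}=e^{-\lambda t}$ gives $S_A(t)z=e^{-\lambda t}z$, so $z\in\mathrm{Ker}(e^{-\lambda t}I-S_A(t))$. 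Taking closed linear span over $k$ yields both the ``$\supset$'' inclusion in (\ref{th-sem-1}) and the inclusion $e^{-t\sigma_p(A)}\setminus\{0\}\subset\sigma_p(S_A(t))$.

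For the hard direction, let $\mu\in\sigma_p(S_A(t))\setminus\{0\}$, choose $\lambda\in\mathbb{C}$ with $e^{-\lambda t}=\mu$, and pick $x\ne 0$ with $S_A(t)x=\mu x$. For each $k\in\mathbb{Z}$ set
\[y_k := \frac{1}{t}\int_0^t e^{\lambda_{k,t} s}\,S_A(s)x\,ds.\]
I would compute $S_A(h)y_k$ directly from $S_A(h)S_A(s)x=S_A(s+h)x$ together with a change of variable shifting the integration interval to $[h,t+h]$. The periodicity built into the choice of $\lambda_{k,t}$, namely
\[e^{\lambda_{k,t}(\sigma+t)}S_A(\sigma+t)x = e^{\lambda t}e^{\lambda_{k,t}\sigma}S_A(\sigma)(e^{-\lambda t}x) = e^{\lambda_{k,t}\sigma}S_A(\sigma)x,\]
permits folding $\int_t^{t+h}$ back onto $\int_0^h$, collapsing the integral to $ty_k$. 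This produces the clean identity
\[S_A(h)y_k = e^{-\lambda_{k,t} h}y_k,\]
so $h^{-1}(S_A(h)-I)y_k\to -\lambda_{k,t}y_k$ as $h\to 0^+$, placing $y_k\in D(A)$ with $Ay_k=\lambda_{k,t}y_k$. As soon as some $y_k\ne 0$, this gives $\lambda_{k,t}\in\sigma_p(A)$ and hence $\mu=e^{-t\lambda_{k,t}}\in e^{-t\sigma_p(A)}$, completing the spectral mapping identity.

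To upgrade to the full identity (\ref{th-sem-1}), I would apply a Fejér-type theorem for continuous Banach-valued periodic functions to $u(s):=e^{\lambda s}S_A(s)x$. The equation $S_A(t)x=e^{-\lambda t}x$ is exactly $u(t)=u(0)=x$, so $u$ extends to a continuous $t$-periodic function on $\mathbb{R}$. The vectors $y_k$ are, up to relabelling, the Banach-valued Fourier coefficients of $u$, and the Cesàro means of its Fourier series converge uniformly to $u$ on $[0,t]$. Evaluating at $s=0$ expresses $x$ as a norm limit of linear combinations of the $y_k$, so $x\in\overline{\mathrm{span}}\bigcup_k\mathrm{Ker}(\lambda_{k,t}I-A)$; this finishes the nontrivial inclusion in (\ref{th-sem-1}) and, via $x\ne 0$, also certifies that at least one $y_k$ is nonzero.

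The main obstacle is the periodicity bookkeeping inside $h^{-1}(S_A(h)-I)y_k$: everything rests on using $e^{\lambda_{k,t} t}=e^{\lambda t}$ and $S_A(t)x=e^{-\lambda t}x$ in tandem to identify the boundary contribution $\int_t^{t+h}$ with $\int_0^h$, without which $y_k$ cannot be placed into $D(A)$ at all. Once that cancellation is secured, the spectral mapping identity drops out instantly and the remaining half of (\ref{th-sem-1}) reduces to a standard invocation of Fejér summability for vector-valued continuous periodic functions.
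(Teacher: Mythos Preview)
The paper does not actually prove this proposition; it is stated with a citation to Hille--Phillips \cite[Theorem 16.7.2]{H-F} and used as a black box. So there is no in-paper proof to compare against.

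Your argument is correct and is essentially the classical one found in Hille--Phillips (or in Engel--Nagel, Spectral Mapping Theorem for the point spectrum). The easy inclusion via uniqueness for the Cauchy problem is standard. For the hard direction, your Fourier-averaging construction of the $y_k$, the periodicity computation yielding $S_A(h)y_k = e^{-\lambda_{k,t} h} y_k$, and the appeal to Fej\'er summability for continuous Banach-valued periodic functions to recover $x$ in the closed span of the $y_k$ are exactly the ingredients of the textbook proof. The only point worth tightening in a write-up is to state explicitly that the Fej\'er kernel argument transfers verbatim to Banach-valued continuous periodic functions because it relies only on the nonnegativity and approximate-identity properties of the kernel together with uniform continuity of $u$; no scalar-specific step is used. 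With that remark in place, your sketch is complete.
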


\section{Averaging principle for equations at resonance}
In this section we are interested in the periodic problems of the form
\begin{equation}\label{A-F-lam1}
\left\{ \begin{array}{ll}
\dot u(t) = - A u(t) + \varepsilon F (t,u(t)), & \quad t > 0 \\
u(t) = u(t + T) &  \quad t\ge 0 \end{array} \right.
\end{equation}
where $T > 0$ is a fixed period, $\varepsilon \in [0,1]$ is a parameter, $A\colon D(A) \to X$ is a linear operator on a real Banach space $X$ and $F\colon [0,+\infty)\times X \to X$ is a continuous mapping. Suppose that $F$ satisfies (F1) and (F2) and $-A$ generates a compact $C_0$ semigroup $\{S_A(t)\}_{t\ge 0}$ such that \\[1mm]
\makebox[10mm][r]{(A1)} \parbox[t]{138mm}{$\mathrm{Ker}\, A = \mathrm{Ker}\, (I - S_A(T))\neq \{0\}$;} \\[1mm]
\makebox[10mm][r]{(A2)} \parbox[t]{138mm}{there is a closed subspace $M\subset X$, $M\neq \{0\}$ such that $X = \mathrm{Ker}\, A \oplus M$ and $S_A(t)M \subset M$ for $t\ge 0$.} \\[1mm]

\begin{Rem}\label{ker-a}
(a) If $A$ is any linear operator such that $-A$ generates a $C_0$ semigroup $\{S_A(t)\}_{t\ge 0}$, then it is immediate that $\mathrm{Ker}\, A\subset \mathrm{Ker}\, (I - S_A(t))$ for $t \ge 0$. \\[1mm]
(b) Condition (A1) can be characterized in terms of the point spectrum. Namely, (A1) is satisfied if and only if
\begin{equation}\label{rrr2}
\{(2k\pi/T) i \ | \ k\in\mathbb{Z}, \ k\neq 0\}\cap\sigma_p(A)= \emptyset.
\end{equation}
To see this suppose first that (A1) holds. If $(2k\pi /T) i\in \sigma_p(A)$ for some $k \neq 0$, then there is $z = x + yi \in X_\mathbb{C} \setminus \{0\}$ such that
\begin{equation}\label{rrr1}
A_\mathbb{C} z = (2k\pi /T) z i.
\end{equation}
We actually know that $-A_\mathbb{C}$ is a generator of the $C_0$ semigroup $\{S_{A_\mathbb{C}}(t)\}_{t\ge 0}$ with $ S_{A_\mathbb{C}}(t) = S_A(t)_\mathbb{C}$ for $t\ge 0$. Therefore, by Proposition \ref{th-sem-spec}, we find that $z\in \mathrm{Ker}\,(I - S_{A_\mathbb{C}}(T))$ and, in consequence, \[S_A(T)x + S_A(T)y i = x + y i.\]
By (A1), we get $A x = Ay = 0$ and finally $A_\mathbb{C} z = 0$, contrary to (\ref{rrr1}). Conversely, suppose that (\ref{rrr2}) is satisfied. Operator $A_\mathbb{C}$ as a generator of a $C_0$ semigroup is closed, and hence $\mathrm{Ker}\, A_\mathbb{C}$ is a closed subspace of $X_\mathbb{C}$. On the other hand, by (\ref{th-sem-1}) and (\ref{rrr2}),
\[\mathrm{Ker}\, (I - S_A(T)_\mathbb{C}) = \mathrm{Ker}\, (I - S_{A_\mathbb{C}}(T)) = \mathrm{cl} \,\mathrm{Ker}\, A_\mathbb{C} = \mathrm{Ker}\, A_\mathbb{C},\]
which implies that $\mathrm{Ker}\, (I - S_A(T)) = \mathrm{Ker}\, A$, i.e. (A1) is satisfied.
\end{Rem}
Since $X$ is a Banach space and $M$, $N$ are closed subspaces, there are projections $P\colon X \to X$ and $Q\colon X \to X$ such that $P^2 = P$, $Q^2=Q$, $P + Q = I$ and $\mathrm{Im}\, P = N$, $\mathrm{Im}\, Q = M$. Let $\Phi^\varepsilon_T\colon X \to X$ be the translation along trajectories operator associated with
$$\dot u(t) = - A u(t) + \varepsilon F (t,u(t)), \quad t > 0$$
and let $\mu$ denote the sum of the algebraic multiplicities of eigenvalues of $S_A(T)$ lying in $(1, +\infty)$. The compactness of the semigroup $\{S_A(t)\}_{t\ge 0}$, implies that the non-zero real eigenvalues of $S_A(T)$ \ form a sequence which is either finite or converges to $0$ and the algebraic multiplicity of each of them is finite. In both cases, only a finite number of eigenvalues is greater than $1$ and hence $\mu$ is well defined.

We are ready to formulate the main result of this section

\begin{Th}\label{th-aver-ker}
Let $g\colon N \to N$ be a mapping given by
\begin{equation*}
g(x) := \int_0^T PF(s,x)\,d s \qquad\mathrm{for}\quad x\in N
\end{equation*}
and let $U \subset N$ and $V \subset M$ with $0\in V$, be open bounded sets. If $g(x)\neq 0$ for
$x \in \partial_N U$, then there is $\varepsilon_0 \in (0,1)$ such that for any $\varepsilon\in (0,\varepsilon_0]$ and $x\in \partial(U \oplus V)$, $\Phi_T^{\varepsilon}(x)\neq x$ and
\begin{equation*}
\mathrm{deg_{LS}}(I - \Phi_T^{\varepsilon}, U \oplus V) = (-1)^{\mu + \dim N}\,\mathrm{deg_B}(g,U)
\end{equation*}
where $\mathrm{deg_{LS}}$ and $\mathrm{deg_B}$ stand for the Leray--Schauder and the Brouwer topological degree, respectively.
\end{Th}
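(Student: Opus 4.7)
The plan is to compute $\mathrm{deg_{LS}}(I - \Phi_T^\varepsilon, U \oplus V)$ by a two-stage homotopy reducing $\Phi_T^\varepsilon$ to an operator whose $I-\Phi$ has product form on $N \oplus M$, and then applying the multiplicativity property of the Leray--Schauder degree. Using the projections $P$ onto $N$ and $Q$ onto $M$, any mild solution $u$ of $\dot u = -Au + \varepsilon F(t, u)$ decomposes as $u = v + w$ with $v = Pu \in N$, $w = Qu \in M$, and since $Av = 0$ on $N$ and $S_A(t) M \subset M$ the components satisfy
\begin{equation*}
v(t) = v_0 + \varepsilon\int_0^t P F(s, u(s))\, ds, \quad w(t) = S_A(t) w_0 + \varepsilon\int_0^t S_A(t - s) Q F(s, u(s))\, ds.
\end{equation*}

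The first homotopy replaces the nonlinearity by $F(t, Pu + \lambda Qu)$ and moves $\lambda$ from $1$ to $0$; the second then replaces the $M$-component forcing by $\lambda' \varepsilon QF(t, v)$ with $\lambda'$ going from $1$ to $0$. By Theorem \ref{tw-exi-con-comp2} the intermediate translation operators are completely continuous and continuous in the parameters, so the composite deformation is admissible in the Leray--Schauder sense provided no fixed points appear on $\partial(U \oplus V)$. At the end of both deformations the translation operator acts as $v_0 + w_0 \mapsto v_\varepsilon(T; v_0) + S_A(T) w_0$, where $v_\varepsilon(\,\cdot\, ; v_0)$ solves the finite-dimensional ODE $\dot v = \varepsilon P F(t, v)$, $v(0) = v_0$, on $N$. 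Hence $I$ minus this operator has product form and multiplicativity yields
\begin{equation*}
\mathrm{deg_{LS}}(I - \Phi_T^{\varepsilon}, U \oplus V) = \mathrm{deg_B}\!\bigl(v_0 \mapsto v_0 - v_\varepsilon(T; v_0), U\bigr) \cdot \mathrm{deg_{LS}}\!\bigl((I - S_A(T))|_M, V\bigr).
\end{equation*}

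The second factor equals $(-1)^\mu$: by (A1), $1$ is not an eigenvalue of $S_A(T)|_M$, so $(I - S_A(T))|_M$ is an invertible compact perturbation of the identity on $M$, and its degree on a bounded neighbourhood $V$ of $0$ equals $(-1)^\beta$ where $\beta$ counts, with algebraic multiplicities, the real eigenvalues of $S_A(T)|_M$ in $(1, +\infty)$; since $S_A(T)$ acts as identity on $N$, these coincide with the corresponding eigenvalues of $S_A(T)$ on $X$ and $\beta = \mu$. For the first factor, $v_0 - v_\varepsilon(T; v_0) = -\varepsilon \int_0^T P F(s, v_\varepsilon(s; v_0))\, ds$, and a Gronwall estimate gives $v_\varepsilon(s; v_0) \to v_0$ uniformly on $[0, T] \times \overline U$ as $\varepsilon \to 0^+$; hence $-\int_0^T PF(s, v_\varepsilon(s; v_0))\, ds$ converges uniformly to $-g(v_0)$ on $\overline U$, and the linear homotopy between these two fields is admissible on $\partial_N U$ for small $\varepsilon$ by the hypothesis $g \neq 0$ on the compact boundary. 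Using $\mathrm{deg_B}(-\varepsilon g, U) = (-1)^{\dim N}\,\mathrm{deg_B}(g, U)$ for $\varepsilon > 0$ and multiplying the two factors gives the claimed formula.

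The main obstacle is verifying admissibility of the two homotopies themselves, i.e.\ that for $\varepsilon$ sufficiently small no fixed points lie on $\partial(U \oplus V)$ for any value of the deformation parameters. I would argue by contradiction: supposing $x_n = v_n + w_n \in \partial(U \oplus V)$ is such a fixed point at parameters $\varepsilon_n \to 0^+$, Lemma \ref{rem-bounded4} bounds the orbits uniformly and Theorem \ref{tw-exi-con-comp2}(b) yields relative compactness in $C([0, T], X)$ of the corresponding mild solutions. After passing to a subsequence, one obtains a limit $x_* = v_* + w_*$ satisfying $x_* = S_A(T) x_*$, so by (A1), $w_* \in \mathrm{Ker}\,(I - S_A(T)) \cap M = \{0\}$. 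Since $0 \in V$ the boundary case $w_n \in \partial_M V$ is excluded, as it would force $w_* \in \partial_M V$; in the remaining case $v_n \in \partial_N U$ and $w_n \in \overline V$, dividing $v_n - v_n(T) = 0$ by $\varepsilon_n$ and passing to the limit yields $g(v_*) = 0$ with $v_* \in \partial_N U$, contradicting the hypothesis on $g$.
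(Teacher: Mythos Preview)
Your argument is correct and follows the same overall architecture as the paper: deform $\Phi_T^\varepsilon$ through admissible homotopies to a product map on $N\oplus M$, apply multiplicativity of the Leray--Schauder degree, compute the $M$-factor as $(-1)^\mu$ via the spectral formula for $I - S_A(T)|_M$, and establish admissibility by a compactness/contradiction argument as $\varepsilon\to 0$ that forces $g(v_*)=0$ on $\partial_N U$. The difference lies in the choice of homotopy. The paper uses a \emph{single} deformation with an auxiliary ``frozen'' variable: it introduces
\[
G(\varepsilon,s,y,t,x)=\varepsilon PF(t,sx+(1-s)Py)+\varepsilon s\,QF(t,x)
\]
and then sets $y$ equal to the initial point, so that at $s=0$ the forcing $\varepsilon PF(t,Px_0)$ is constant along the trajectory and the translation map is explicitly $x\mapsto S_A(T)x+\varepsilon\, g(Px)$; the $N$-factor is then \emph{exactly} $-\varepsilon g$, with no further work. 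Your route instead uses two decoupling homotopies followed by a third (averaging-type) step relating the Poincar\'e map of $\dot v=\varepsilon PF(t,v)$ to $-\varepsilon g$ via the Gronwall estimate. Both are valid; the paper's frozen-variable trick buys one fewer homotopy and avoids the finite-dimensional averaging argument, at the price of carrying the extra parameter $y\in\overline{U\oplus V}$ through the complete-continuity verification. Your version is perhaps more transparent conceptually, since each stage has an obvious geometric meaning (kill the $w$-dependence of $F$, then kill the $M$-forcing, then average on $N$).
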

\begin{proof}
Throughout the proof, we write $W := U\oplus V$ and $\Lambda:= [0,1]\times [0,1]\times \overline{W}$. For any $(\varepsilon, s, y)\in\Lambda$ consider the differential equation
\begin{equation}\label{A-G-lam}
\dot u(t) = - A u(t) + G (\varepsilon, s,y,t,u(t)), \qquad t > 0
\end{equation}
where $G\colon \Lambda \times [0,+\infty)\times X \to X$ is defined by
\begin{equation*}
G(\varepsilon,s,y,t,x) := \varepsilon PF(t,s x + (1 - s)Py) + \varepsilon s QF(t,x).
\end{equation*}
We check that $G$ satisfies condition (F1). Indeed, fix $(\varepsilon,s,y)\in \Lambda$ and take $x_0\in X$. If $s = 0$ then $G(\varepsilon,s,y,t,\,\cdot\,)$ is constant, hence we may suppose that $s\neq 0$. There are constants $L_0,L_1 > 0$ and neighborhoods $V_0,V_1\subset X$ of points $s x_0 + (1 - s)Py$ and $x_0$, respectively, such that
\[\|F(t,x_1) - F(t,x_2)\| \le L_0 \|x_1 - x_2\| \qquad\mathrm{for}\quad x_1,x_2\in V_0,\quad t\in [0,+\infty)\]
and
\[\|F(t,x_1) - F(t,x_2)\| \le L_1 \|x_1 - x_2\| \qquad\mathrm{for}\quad x_1,x_2\in V_1,\quad t\in [0,+\infty).\]
Then $V' := \frac{1}{s}(V_0 - (1 - s)Py) \cap V_1$ is open, $x_0\in V'$ and, for any $x_1,x_2\in V'$,
\begin{multline*}
\|G(\varepsilon,s,y,t,x_1) - G(\varepsilon,s,y,t,x_2)\|   \le \\
 \varepsilon \|P\|\|F(t,s x_1 + (1 - s)P y) - F(t,s x_2 + (1 - s)P y)\|
   + s\varepsilon \|Q\|\|F(t,x_1) - F(t,x_2)\| \\
  \le \varepsilon L_0\|P\|\|x_1 - x_2\| + s\varepsilon L_1\|Q\|\|x_1 - x_2\|
  \le (L_0\|P\|+ L_1\|Q\|)\|x_1 - x_2\|,
\end{multline*}
i.e. (F1) is satisfied. An easy computation shows that condition (F2) also holds true. \\ If $(\varepsilon,s,y)\in \Lambda$ and $x\in X$, then by $u(\, \cdot \, ;\varepsilon,s,y,x)\colon [0,+\infty) \to X$ we denote unique mild solution of (\ref{A-G-lam}) starting at $x$. For $t\ge 0$, let $\Theta_t\colon \Lambda\times X \to X$ be the translation along trajectories operator given by $$\Theta_t(\varepsilon,s,y,x) := u(t ; \varepsilon,s,y,x) \qquad\mathrm{for}\quad (\varepsilon,s,y)\in\Lambda,\quad x\in X,\quad t\in[0,+\infty).$$
For every $\varepsilon\in (0,1)$ we define the mapping $M^\varepsilon\colon [0,1]\times \overline{W}\to X$ by
\begin{equation*}
M^\varepsilon(s,x) := \Theta_T(\varepsilon,s,x,x).
\end{equation*}
Clearly $M^\varepsilon$ is completely continuous for every $\varepsilon \in (0,1)$. Indeed, by Theorem \ref{tw-exi-con-comp2} the operator $\Theta_T$ is completely continuous and, consequently, the set $\Theta_T(\Lambda\times\overline{W}) \subset X$ is relatively compact. Since
\begin{equation*}
M^\varepsilon([0,1]\times \overline{W})
= \Theta_T(\{\varepsilon\}\times [0,1]\times \overline{W}\times\overline{W})
\subset \Theta_T(\Lambda\times\overline{W}),
\end{equation*}
the set $M^\varepsilon([0,1]\times \overline{W})$ is relatively compact as well. \\
\indent Now we claim that there is $\varepsilon_0 \in (0,1)$ such that
\begin{equation}\label{rownow}
M^\varepsilon(s,x) \neq x \qquad\mathrm{for}\quad x\in\partial W,\quad s\in [0,1],\quad \varepsilon\in (0, \varepsilon_0].
\end{equation}
Suppose to the contrary that there are sequences $(\varepsilon_n)$ in $(0,1)$, $(s_n)$ in $[0,1]$ and $(x_n)$ in $\partial W$ such that $\varepsilon_n\to 0$ and
\begin{equation}\label{ciag-theta1}
\Theta_T(\varepsilon_n,s_n,x_n,x_n) = M^{\varepsilon_n}(s_n,x_n) = x_n \qquad\mathrm{for}\quad n\ge 1.
\end{equation}
We may assume that $s_n\to s_0$ with $s_0\in[0,1]$. By (\ref{ciag-theta1}) and the boundedness of $(x_n)\subset\partial W$, the complete continuity of $\Theta_T$ implies that $(x_n)$ has a convergent subsequence. Without loss of generality we may assume that $x_n \to x_0$ as $n\to +\infty$, for some $x_0\in \partial W$. After passing to the limit in (\ref{ciag-theta1}), by Theorem \ref{tw-exi-con-comp2} (a), it follows that
\begin{equation}\label{ciag-theta2}
\Theta_T(0,s_0, x_0,x_0) = x_0.
\end{equation}
On the other hand
\begin{equation}\label{equ-theta1}
\Theta_t(0,s_0, x_0,x_0) = S_A(t)x_0 \qquad\mathrm{for}\quad t \ge 0,
\end{equation}
which together with (\ref{ciag-theta2}) implies that $x_0 = S_A(T)x_0$. Condition (A1) yields $x_0\in\mathrm{Ker}\, A = N$ and hence $Qx_0 = 0$. Since $0\in V$, and the equality
\begin{equation*}
\partial (U\oplus V) = \partial_N U \oplus \mathrm{cl}_M V \cup \mathrm{cl}_N U\oplus \partial_M V
\end{equation*}
holds true, we infer that $x_0\in \partial_N U$. By using of Remark \ref{ker-a} (a) and (\ref{equ-theta1}) we also find that
\begin{equation}\label{eq-theta2}
\Theta_t(0,s_0,x_0,x_0) = S_A(t)x_0 = x_0 \qquad\mathrm{for}\quad t\ge 0.
\end{equation}
For every $n\ge 1$, write $u_n := u(\,\cdot \, ;\varepsilon_n,s_n,x_n,x_n)$ for brevity. As a consequence of (\ref{ciag-theta1})
\begin{align}\label{ja17}
x_n & = S_A(T)x_n + \varepsilon_n\int_0^T S_A(T - \tau) P F(\tau,s_n u_n(\tau)
+ (1 - s_n)Px_n)\, d \tau \\ \nonumber
& \qquad + \varepsilon_n s_n\int_0^T S_A(T - \tau) Q F(\tau,u_n(\tau))\, d \tau \qquad\mathrm{for}\quad n\ge 1.
\end{align}
The fact that the spaces $M, N\subset X$ are closed and $S_A(t)N\subset N$, $S_A(t)M\subset M$, for
$t\ge 0$, leads to
\begin{align}\label{ja17a}
\varepsilon_n\int_0^T S_A(T - \tau) P F(\tau,s_n u_n(\tau) + (1 - s_n)Px_n)\, d \tau \in N \qquad\text{ and } \\ \nonumber \varepsilon_n s_n\int_0^T S_A(T - \tau) Q F(\tau,u_n(\tau))\, d \tau \in M \qquad\mathrm{for}\quad n\ge 1.
\end{align}
Combining (\ref{ja17}) with (\ref{ja17a}) gives
\begin{equation*}
P x_n = S_A(T)P x_n + \varepsilon_n\int_0^T S_A(T - \tau) P F(\tau,s_n u_n(\tau)+ (1 - s_n)P x_n)\, d \tau \qquad\mathrm{for}\quad n\ge 1,
\end{equation*}
and therefore
\begin{equation}\label{eq-g1}
\int_0^T P F(\tau,s_n u_n(\tau)+ (1 - s_n)P x_n)\, d \tau = 0 \qquad\mathrm{for}\quad n\ge 1,
\end{equation}
since $P x_n \in \mathrm{Ker}\, A = \mathrm{Ker}\,(I - S_A(T))$ for $n\ge 1$.
By Theorem \ref{tw-exi-con-comp2} (a) and (\ref{eq-theta2}) the sequence $(u_n)$ converges uniformly on $[0,T]$ to the constant mapping equal to $x_0$, hence, passing to the limit in (\ref{eq-g1}), we infer that
\begin{equation*}
g(x_0) = \int_0^T P F(\tau, x_0)\, d \tau = 0.
\end{equation*}
This contradicts the assumption, since $x_0\in\partial_N U$, and proves (\ref{rownow}). \\
\indent By the homotopy invariance of topological degree we have
\begin{equation}\label{deg1}
\mathrm{deg_{LS}}(I - \Phi_T^\varepsilon, W) = \mathrm{deg_{LS}}(I - M^\varepsilon(1,\,\cdot\,), W) = \mathrm{deg_{LS}}(I - M^\varepsilon(0,\,\cdot\,), W)
\end{equation}
for $\varepsilon\in (0,\varepsilon_0]$. \\
\indent Let the mappings $\widetilde{M}_1^\varepsilon\colon\overline{U}\to N$ and $\widetilde{M}_2^\varepsilon\colon\overline{V}\to M$ be given by
\begin{align*}
& \widetilde{M}_1^\varepsilon(x_1) := x_1 + \varepsilon \int_0^T PF(s,x_1)\,d s && \hspace{-20mm}\text{for}\quad x_1\in\overline U, \\
& \widetilde{M}_2^\varepsilon(x_2) := S_A(T)_{|M}x_2 && \hspace{-20mm}\text{for}\quad x_2\in\overline V
\end{align*}
and let $\widetilde{M}^\varepsilon\colon \overline{U} \times \overline{V} \to N\times M$ be their product
\begin{equation*}
\widetilde{M}^\varepsilon(x_1,x_2) := (\widetilde{M}_1^\varepsilon(x_1),\widetilde{M}_2^\varepsilon(x_2)) \qquad\mathrm{for}\quad (x_1,x_2)\in \overline{U} \times \overline{V}.
\end{equation*}
For $\varepsilon\in (0,1)$ and $x\in X$
\begin{align*}
M^\varepsilon(0,x)  = S_A(T)x + \varepsilon\int_0^T S_A(T - \tau)PF(\tau,Px)\, d \tau
 = S_A(T)x + \varepsilon\int_0^T PF(\tau,Px)\, d \tau.
\end{align*}
and therefore it is easily seen that the mappings $M^\varepsilon(0,\,\cdot\,)$ and $\widetilde{M}^\varepsilon$ are topologically conjugate.
By the compactness of the $C_0$ semigroup $\{S_A(t):M \to M\}_{t\ge 0}$ and the fact that $\mathrm{Ker}\,(I - S_A(T)_{|M}) = 0$, we infer that the mapping
\[I - \widetilde{M}^\varepsilon_2\colon M \to M\]
is a linear isomorphism. By use of the multiplication property of topological degree, for any $\varepsilon \in (0,1)$,
\begin{align*}
\mathrm{deg_{LS}}(I - M^\varepsilon(0,\,\cdot \,), W) & = \mathrm{deg_{LS}}(I - \widetilde{M}^\varepsilon, U\times V) \\
& = \mathrm{deg_B}(I - \widetilde{M}_1^\varepsilon, U) \cdot \mathrm{deg_{LS}}(I - \widetilde{M}_2^\varepsilon, V).
\end{align*}
Combining this with (\ref{deg1}), we conclude  that
\begin{align*}
\mathrm{deg_{LS}}(I - \Phi_T^\varepsilon, W) & = \mathrm{deg_B}(-\varepsilon \, g, U) \cdot \mathrm{deg_{LS}}(I - S_A(T)_{|M}, V) \\ \nonumber
& = (-1)^{\dim N} \mathrm{deg_B}(g, U) \cdot \mathrm{deg_{LS}}(I - S_A(T)_{|M}, V),
\end{align*}
for $\varepsilon\in(0,\varepsilon_0]$. If $\lambda\neq 1$ and $k \ge 1$ is
an integer then, by (A1) and (A2), $$\mathrm{Ker}\,(\lambda I -
S_A(T))^k_{|M} = \mathrm{Ker}\,(\lambda I - S_A(T))^k.$$ Hence
$\sigma_p(S_A(T)_{|M}) = \sigma_p(S_A(T)) \setminus \{1\}$ and the
algebraic multiplicities of the corresponding eigenvalues are the
same. Therefore, by the standard spectral properties of compact
operators (see e.g. \cite[Theorem 12.8.3]{Dugundji-Granas}),
\begin{equation*}
\mathrm{deg_{LS}}(I - S_A(T)_{|M}, V) = (-1)^\mu,
\end{equation*}
and finally
\begin{equation*}
\mathrm{deg_{LS}}(I - \Phi_T^\varepsilon, W) = (-1)^{\mu + \dim N}\mathrm{deg_B}(g, U),
\end{equation*}
for every $\varepsilon\in (0,\varepsilon_0]$, which completes the proof.
\end{proof}

An immediate consequence of Theorem \ref{th-aver-ker} is the following

\begin{Cor}\label{cor-aver}
Let $U \subset N$ and $V \subset M$ with $0\in V$, be open bounded sets such that $g(x)\neq 0$ for
$x \in \partial_N U$. If $\mathrm{deg_B}(g, U) \neq 0$, then there is $\varepsilon_0 \in (0,1)$ such that for any $\varepsilon\in (0,\varepsilon_0]$ problem (\ref{A-F-lam1}) admits a $T$-periodic mild solution.
\end{Cor}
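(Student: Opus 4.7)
The plan is to deduce the corollary directly from Theorem \ref{th-aver-ker} together with the existence (solution) property of the Leray--Schauder degree, so essentially no new work is required beyond bookkeeping.

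First, I would apply Theorem \ref{th-aver-ker} to the given sets $U \subset N$ and $V \subset M$. Since by hypothesis $g(x) \neq 0$ for every $x \in \partial_N U$, the theorem produces a number $\varepsilon_0 \in (0,1)$ such that for every $\varepsilon \in (0, \varepsilon_0]$ the map $\Phi_T^\varepsilon$ has no fixed points on $\partial(U \oplus V)$ (so the Leray--Schauder degree is well defined on $U \oplus V$) and moreover
\begin{equation*}
\mathrm{deg_{LS}}(I - \Phi_T^\varepsilon, U \oplus V) = (-1)^{\mu + \dim N}\,\mathrm{deg_B}(g, U).
\end{equation*}

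Next, I would use the assumption $\mathrm{deg_B}(g, U) \neq 0$. Since the sign factor $(-1)^{\mu + \dim N}$ is nonzero, the right-hand side above is nonzero, hence
\begin{equation*}
\mathrm{deg_{LS}}(I - \Phi_T^\varepsilon, U \oplus V) \neq 0 \qquad \text{for every } \varepsilon \in (0, \varepsilon_0].
\end{equation*}
The existence property of the Leray--Schauder degree then yields, for each such $\varepsilon$, a point $x_\varepsilon \in U \oplus V$ with $\Phi_T^\varepsilon(x_\varepsilon) = x_\varepsilon$. Note that the Leray--Schauder degree is applicable because $\Phi_T^\varepsilon$ is completely continuous by Theorem \ref{tw-exi-con-comp2}(b), so $I - \Phi_T^\varepsilon$ is an admissible compact vector field.

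Finally, I would translate the fixed-point statement into the periodic-solution statement. By definition of the translation operator, $\Phi_T^\varepsilon(x_\varepsilon) = x_\varepsilon$ means that the unique mild solution $u(\,\cdot\,;\varepsilon, x_\varepsilon)$ of (\ref{A-F-lam1}) starting at $x_\varepsilon$ satisfies $u(T;\varepsilon, x_\varepsilon) = u(0;\varepsilon, x_\varepsilon)$. Extending this solution $T$-periodically (which is consistent with the mild-solution integral equation thanks to the autonomous character of $-A$ and the fact that $F$ itself is $T$-periodic in $t$ in the problem (\ref{A-F-lam1})), we obtain a $T$-periodic mild solution of (\ref{A-F-lam1}). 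There is no real obstacle here; the only mild subtlety worth mentioning is the routine check that the fixed point of $\Phi_T^\varepsilon$ indeed extends to a genuinely $T$-periodic mild solution, which is a standard consequence of the uniqueness of mild solutions provided by Theorem \ref{tw-exi-con-comp2}.
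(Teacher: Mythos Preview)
Your proposal is correct and matches the paper's treatment: the paper states the corollary as ``an immediate consequence of Theorem \ref{th-aver-ker}'' and gives no separate proof, and your argument (nonzero Brouwer degree $\Rightarrow$ nonzero Leray--Schauder degree via the formula $\Rightarrow$ fixed point of $\Phi_T^\varepsilon$ $\Rightarrow$ $T$-periodic mild solution) is exactly the intended one-line deduction, spelled out in full.
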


\section{Periodic problems with the Landesman--Lazer type conditions}

Let $\Omega\subset\mathbb{R}^n$, $n\ge 1$, be an open bounded set and let $X := L^2(\Omega)$. By $\|\cdot\|$ and $\langle\,\cdot \, , \,\cdot\,\rangle$ we denote the usual norm and scalar product on $X$, respectively. Assume that continuous mapping $f\colon [0,+\infty)\times\Omega\times \mathbb{R}\to \mathbb{R}$ satisfies the following conditions \\[2mm]
\makebox[10mm][r]{(a)} \parbox[t]{138mm}{there is a constant $m > 0$ such that\\[-3mm]
    \begin{equation*}
    |f(t,x,y)| \le m \qquad\mathrm{for}\quad t\in [0,+\infty),\quad x\in\Omega, \quad y\in \mathbb{R};
    \end{equation*}} \\
\makebox[10mm][r]{(b)} \parbox[t]{138mm}{there is a constant $L > 0$ such that for any $t\in [0,+\infty)$, $x\in\Omega$ and $y_1,y_2\in\mathbb{R}$ \\[-3mm]
    \begin{equation*}
    |f(t,x,y_1) - f(t,x,y_2)| \le L|y_1 - y_2|;
    \end{equation*}} \\
\makebox[10mm][r]{(c)} \parbox[t]{138mm}{$f(t, x, y) = f(t+T, x, y)$ \ for $t\in [0,+\infty)$, $x\in\Omega$ and $y\in\mathbb{R}$;} \\[2mm]
\makebox[10mm][r]{(d)} \parbox[t]{138mm}{there are continuous functions $f_+,f_-\colon [0,+\infty)\times\Omega \to \mathbb{R}$ such that
\begin{align*}
f_+(t,x)  = \lim_{y \to +\infty} f(t,x,y) \qquad \text{and} \qquad f_-(t,x)  = \lim_{y \to -\infty} f(t,x,y)
\end{align*}
for $t\in [0,+\infty)$ and $x\in\Omega$. }\\[5mm]
\indent Consider the following periodic differential problem
\begin{equation}\label{A-eps-res}
\left\{ \begin{array}{ll}
\dot u(t) = - A u(t) + \lambda u(t) +  F(t,u(t)), & \qquad t > 0\\
u(t) = u(t + T) & \qquad t\ge 0 \end{array} \right.
\end{equation}
where $A\colon D(A) \to X$ is a linear operator such that $-A$ generates a compact $C_0$ semigroup $\{S_A(t)\}_{t\ge 0}$ of bounded linear operators on $X$, $\lambda$ is a real eigenvalue of $A$ and $F\colon [0,+\infty)\times X \to X$ is a continuous mapping given by the formula
\begin{equation*}
F(t,u)(x) := f(t,x,u(x)) \qquad\mathrm{for}\quad t\in [0,+\infty),\quad x\in\Omega.
\end{equation*}
Additionally, we suppose that  \\[2mm]
\makebox[10mm][r]{(A3)} \parbox[t]{138mm}{ $\mathrm{Ker}\,(A - \lambda I) = \mathrm{Ker}\,(A^* - \lambda I) = \mathrm{Ker}\,(I - e^{\lambda T}S_{A}(T))$.} \\[2mm]
Recall that by assumptions (a) and (b), the mapping $F$ is well defined, bounded, continuous and Lipschitz uniformly with respect to time. Therefore, the translations along trajectories operator $\Psi_t\colon X \to X$ associated with
$$\dot u(t) = - A u(t) + \lambda u(t) +  F(t,u(t)), \qquad t > 0$$
is well-defined and completely continuous for $t > 0$, as a consequence of Theorem \ref{tw-exi-con-comp2}. Let $N_\lambda  := \mathrm{Ker}\,(\lambda I - A)$ and define
$g\colon N_\lambda  \to N_\lambda $ by
\begin{equation*}
g(u) := \int_0^T PF(t,u) \,d t \qquad\text{for}\quad u\in N_\lambda ,
\end{equation*}
where $P\colon X\to X$ is the orthogonal projection onto $N_\lambda $. Since $\{S_A(t)\}_{t\ge 0}$ is compact, $A$ has compact resolvents and $\dim N_\lambda  < \infty$. Furthermore note that, for any $u,z\in N_\lambda $,
\begin{align}\label{gr3}
\langle g(u),z \rangle & = \int_0^T \langle PF(t,u), z\rangle \,d t = \int_0^T \langle F(t,u), z \rangle \,d t \\ \nonumber
& = \int_0^T \!\!\! \int_\Omega  f(t,x,u(x))z(x) \,d x dt.
\end{align}
We are ready to state the main result of this section
\begin{Th}\label{th-guid-fun1}
Suppose that $f \colon  [0,+\infty)\times\Omega\times\mathbb{R} \to \mathbb{R}$ satisfies one of the following Landesman--Lazer type conditions:
\begin{equation}\label{lazer1}
\int_0^T \!\!\!\int_{\{u>0\}} f_+(t,x)u(x) \,d x dt +\int_0^T\!\!\!\int_{\{u<0\}} f_-(t,x) u(x)\,d x dt > 0,
\end{equation}
for any $u\in N_\lambda $ with $\|u\| = 1$, or
\begin{equation}\label{lazer2}
\int_0^T \!\!\! \int_{\{u>0\}} f_+(t,x)u(x) \,d x dt + \int_0^T \!\!\! \int_{\{u<0\}} f_-(t,x) u(x) \,d x dt < 0,
\end{equation}
for any $u\in N_\lambda $ with $\|u\| = 1$. Then the problem (\ref{A-eps-res}) admits a $T$-periodic mild solution.
\end{Th}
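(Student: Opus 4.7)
The plan is to reduce to Theorem~\ref{th-aver-ker} by shifting $A$ by $\lambda I$, and then extend the degree formula from the small-$\varepsilon$ averaging regime up to $\varepsilon=1$ via homotopy invariance. Set $\widetilde A := A - \lambda I$, so that $-\widetilde A$ generates the compact $C_0$ semigroup $S_{\widetilde A}(t) := e^{\lambda t}S_A(t)$. Condition (A3) yields $\mathrm{Ker}\,\widetilde A = N_\lambda = \mathrm{Ker}\,(I - S_{\widetilde A}(T))$, verifying (A1) for $\widetilde A$. Taking $M := N_\lambda^\perp$, the identity $S_A(t)^* = S_{A^*}(t)$ together with $A^* n = \lambda n$ for $n \in N_\lambda$ (from (A3)) gives $S_A(t)^* n = e^{-\lambda t} n$; hence for $m \in M$, $\langle S_{\widetilde A}(t)m, n\rangle = e^{\lambda t}\langle m, e^{-\lambda t} n\rangle = 0$, so $S_{\widetilde A}(t)M\subset M$ and (A2) holds, with $P$ the orthogonal projection onto $N_\lambda$. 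Rewriting (\ref{A-eps-res}) as $\dot u = -\widetilde A u + F(t,u)$, I embed it in the family $\dot u = -\widetilde A u + \varepsilon F(t,u)$, $\varepsilon\in(0,1]$, whose translation operator $\Phi_T^\varepsilon$ satisfies $\Phi_T^1 = \Psi_T$.

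The decisive step is a uniform a priori bound: there exists $R>0$ such that every fixed point of $\Phi_T^\varepsilon$ with $\varepsilon\in(0,1]$ has norm less than $R$. Compactness of $S_{\widetilde A}(T)|_M$ and triviality of $\mathrm{Ker}\,(I - S_{\widetilde A}(T)|_M)$ (from (A3)) make $I - S_{\widetilde A}(T)|_M$ invertible on $M$; combined with $\|F\|_\infty \le m$ from (a), this yields a bound on $\|Q u_\varepsilon(t)\|$ uniform in $\varepsilon\in(0,1]$ and $t\in[0,T]$. For the $P$-component, projecting the mild solution equation, using $S_{\widetilde A}(t)|_{N_\lambda}=I$ and $T$-periodicity gives the orthogonality relation $\int_0^T PF(t,u_\varepsilon(t))\,dt = 0$. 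If $\|Pu_{\varepsilon_n}(0)\|\to \infty$ along some sequence, set $r_n := \|Pu_{\varepsilon_n}(0)\|$ and $\phi_n := Pu_{\varepsilon_n}(0)/r_n$. Finite-dimensionality of $N_\lambda$ produces a subsequence with $\phi_n \to \phi$ in $L^2(\Omega)$, $\|\phi\|=1$; the uniform bound on $Qu_{\varepsilon_n}$ then forces $u_{\varepsilon_n}/r_n \to \phi$ in $C([0,T],L^2(\Omega))$, and a further subsequence converges pointwise a.e.\ on $[0,T]\times\Omega$. Consequently $u_{\varepsilon_n}(t,x)\to +\infty$ on $\{\phi>0\}$ and $\to -\infty$ on $\{\phi<0\}$. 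Pairing the orthogonality relation with $\phi$ and applying dominated convergence (with $|f|\le m$ and $|\phi|\in L^1(\Omega)$) produces
\begin{equation*}
\int_0^T\!\!\int_{\{\phi>0\}} f_+(t,x)\phi(x)\,dx\,dt + \int_0^T\!\!\int_{\{\phi<0\}} f_-(t,x)\phi(x)\,dx\,dt = 0,
\end{equation*}
contradicting (\ref{lazer1}) or (\ref{lazer2}). The same limiting argument carried out on $N_\lambda$ alone yields $\langle g(u),u\rangle > 0$ (resp.\ $<0$) for $\|u\|$ large, so the straight-line homotopy from $g$ to $\mathrm{id}$ (resp.\ $-\mathrm{id}$) is admissible on a large ball $B_{N_\lambda}(0,R_1)\subset N_\lambda$ and $\mathrm{deg_B}(g,B_{N_\lambda}(0,R_1)) \ne 0$.

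Set $U := B_{N_\lambda}(0,R_1)$ and $V := B_M(0,R_1)$ with $R_1>R$. Theorem~\ref{th-aver-ker} applied to $\widetilde A$ furnishes $\varepsilon_0\in(0,1)$ such that
\begin{equation*}
\mathrm{deg_{LS}}(I - \Phi_T^{\varepsilon_0}, U\oplus V) = (-1)^{\mu(\lambda)+\dim N_\lambda}\,\mathrm{deg_B}(g,U) \ne 0.
\end{equation*}
The a priori bound guarantees $\Phi_T^\varepsilon(x) \ne x$ on $\partial(U\oplus V)$ for every $\varepsilon \in [\varepsilon_0,1]$, while Theorem~\ref{tw-exi-con-comp2}(b) ensures complete continuity of the homotopy $(\varepsilon,x)\mapsto \Phi_T^\varepsilon(x)$. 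Invariance of the Leray--Schauder degree thus yields $\mathrm{deg_{LS}}(I-\Psi_T, U\oplus V) \ne 0$, and the existence property produces a fixed point of $\Psi_T$, which is the initial value of the desired $T$-periodic mild solution of (\ref{A-eps-res}). The principal obstacle is the a priori bound: in particular, extracting pointwise a.e.\ convergence of $u_{\varepsilon_n}/r_n$ from $C([0,T],L^2(\Omega))$-convergence and linking the blow-up set of $u_{\varepsilon_n}$ to the sign pattern of $\phi$, so that the Landesman--Lazer hypothesis can be activated through dominated convergence.
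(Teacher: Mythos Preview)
Your proposal is correct and follows essentially the same route as the paper: the paper factors the argument through Theorem~\ref{th-reso} (the a~priori bound plus degree formula via continuation in $\varepsilon$) and Proposition~\ref{th-guid-fun} (computing $\mathrm{deg_B}(g,\cdot)$ via the sign of $\langle g(u),u\rangle$ and a straight-line homotopy to $\pm I$), which is exactly your plan. The only cosmetic difference is that the paper obtains the blow-up limit $z_0\in N_\lambda$ by normalizing the full $u_n$ and invoking compactness of $e^{\lambda T}S_A(T)$, whereas you first bound the $Q$-component via invertibility of $(I-S_{\widetilde A}(T))|_M$ and then analyze the $P$-component; both lead to the same contradiction with the Landesman--Lazer hypothesis.
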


In the proof of preceding theorem, we use the following

\begin{Th}\label{th-reso}
Let $f \colon  [0,+\infty)\times\Omega\times \mathbb{R} \to \mathbb{R}$ satisfy the following condition:
\begin{equation}\label{lazer}
\int_0^T \!\!\! \int_{\{u>0\}} f_+(t,x)u(x) \,d x dt + \int_0^T \!\!\! \int_{\{u<0\}} f_-(t,x) u(x) \,d x dt
\neq 0
\end{equation}
for every $u\in N_\lambda $ with $\|u\| = 1$. Then there is $R > 0$ such that $\Psi_T(u)\neq u$ \ for $u\in X\setminus B(0,R)$, $g(u) \neq 0$ for $u\in N_\lambda  \setminus B(0,R) )$
and
\begin{equation}\label{ll3}
\mathrm{deg_{LS}}(I - \Psi_T, B(0,R)) = (-1)^{\mu(\lambda) + \dim N_\lambda }\,\mathrm{deg_B}(g, B(0,R)\cap N_\lambda )
\end{equation}
where $\mu(\lambda)$ is the sum of the algebraic multiplicities of the eigenvalues of $e^{\lambda T} S_A(T)\colon X\to X$ lying in $(1,+\infty)$.
\end{Th}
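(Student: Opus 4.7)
The plan is to reduce Theorem \ref{th-reso} to the averaging principle (Theorem \ref{th-aver-ker}) applied to the shifted operator $\tilde A := A - \lambda I$. Since $-\tilde A$ generates the compact $C_0$ semigroup $S_{\tilde A}(t) := e^{\lambda t}S_A(t)$, assumption (A3) is precisely (A1) for $\tilde A$, with $N = N_\lambda$. For (A2), I would take $M := N_\lambda^\perp$ and observe that the adjoint semigroup $S_{\tilde A}(t)^*$, generated by $-\tilde A^* = -(A^* - \lambda I)$, fixes $N_\lambda = \mathrm{Ker}\,(A^* - \lambda I)$ by (A3); hence $S_{\tilde A}(t)$ preserves $M$. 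The projection $P$ is the orthogonal projection onto $N_\lambda$, and $Q = I - P$; both commute with $S_{\tilde A}(t)$ by the direct-sum decomposition. For $\varepsilon \in [0,1]$ let $\Phi_T^\varepsilon$ be the translation operator of $\dot u = -\tilde A u + \varepsilon F(t,u)$, so $\Psi_T = \Phi_T^1$.

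The core step is an a priori bound: I aim to find $R > 0$ such that $\Phi_T^\varepsilon(u) \neq u$ for every $\varepsilon \in (0,1]$ and $\|u\| \ge R$, and $g(u) \neq 0$ for $u \in N_\lambda$ with $\|u\| \ge R$. Arguing by contradiction, suppose $\varepsilon_n \in (0,1]$ and fixed points $u_n$ with periodic mild solutions $\tilde u_n$ satisfy $\|u_n\| \to \infty$. Since $F$ is uniformly bounded in $X$ (from $|f| \le m$) and $S_{\tilde A}$ is bounded on $[0,T]$, Duhamel yields $\|\tilde u_n(t) - S_{\tilde A}(t)u_n\| \le C$ on $[0,T]$, uniformly in $n$. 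Applying $Q$ to the fixed-point identity and using that $I - S_{\tilde A}(T)|_M$ is invertible (Fredholm, by $1 \notin \sigma_p(S_{\tilde A}(T)|_M)$ and compactness of $S_{\tilde A}(T)|_M$), one deduces that $Qu_n$ stays bounded, so $\|Pu_n\| \to \infty$. Setting $v_n := u_n/\|u_n\|$ and invoking finite-dimensionality of $N_\lambda$, pass to a subsequence with $v_n \to v \in N_\lambda$, $\|v\| = 1$; then $\tilde u_n/\|u_n\| \to v$ uniformly on $[0,T]$ in $L^2$, and along a further subsequence pointwise a.e., so $\tilde u_n(t,x) \to +\infty$ on $\{v>0\}$ and $\to -\infty$ on $\{v<0\}$. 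Projecting the fixed-point equation onto $N_\lambda$ (using $S_{\tilde A}(t)|_{N_\lambda} = I$) gives $\varepsilon_n \int_0^T PF(s,\tilde u_n(s))\,ds = 0$, so $\langle \int_0^T F(s,\tilde u_n(s))\,ds, v\rangle = 0$. Dominated convergence (domination by $m|v|$, pointwise limits from (d)) yields
\[
\int_0^T\!\!\!\int_{\{v>0\}} f_+(t,x)v(x)\,dx\,dt + \int_0^T\!\!\!\int_{\{v<0\}} f_-(t,x)v(x)\,dx\,dt = 0,
\]
contradicting \eqref{lazer}. The bound on $g$ follows by the same recipe, applied directly to $\langle g(u_n), v_n\rangle = 0$ in $N_\lambda$.

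To finish, I set $U := B_{N_\lambda}(0,R) = B(0,R) \cap N_\lambda$ and $V := B_M(0,R)$. Since $P, Q$ are orthogonal projections of norm one, $B(0,R) \subset U \oplus V$ and $\partial(U\oplus V) \subset \{u : \|u\| \ge R\}$. By Theorem \ref{tw-exi-con-comp2} the map $(\varepsilon,u)\mapsto \Phi_T^\varepsilon(u)$ is a completely continuous homotopy free of fixed points on $\partial B(0,R)$ and on $\partial(U\oplus V)$ for every $\varepsilon \in (0,1]$, so by homotopy invariance and excision,
\[
\mathrm{deg_{LS}}(I - \Psi_T, B(0,R)) = \mathrm{deg_{LS}}(I - \Phi_T^\varepsilon, B(0,R)) = \mathrm{deg_{LS}}(I - \Phi_T^\varepsilon, U \oplus V)
\]
for every $\varepsilon \in (0,1]$. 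The a priori bound on $g$ makes Theorem \ref{th-aver-ker} applicable with this $U$ and $V$, so for $\varepsilon$ sufficiently small the right-hand side equals $(-1)^{\mu(\lambda)+\dim N_\lambda}\mathrm{deg_B}(g, U)$, which is \eqref{ll3}.

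The point I expect to be most delicate is the blow-up analysis producing a limit direction $v \in N_\lambda$ of unit norm along which $\tilde u_n$ diverges pointwise with the correct sign pattern; this is what lets hypothesis (d) and dominated convergence trigger the Landesman--Lazer contradiction. The three ingredients that make it work are the $L^\infty$ bound (a) on $f$ (keeping $F$ bounded in $X$), the invertibility of $I - S_{\tilde A}(T)|_M$ afforded by (A3) together with compactness of the semigroup (which forces $\|Pu_n\|\to\infty$), and the commutation of $P$ with $S_{\tilde A}(t)$ coming from (A2).
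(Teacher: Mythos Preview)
Your proposal is correct and follows the paper's strategy: verify (A1)--(A2) for $\tilde A=A-\lambda I$, establish an $\varepsilon$-uniform a~priori bound by a blow-up argument that produces a unit $v\in N_\lambda$ violating \eqref{lazer}, and then connect $\mathrm{deg_{LS}}(I-\Psi_T,B(0,R))$ to Theorem~\ref{th-aver-ker} via homotopy invariance and excision between $B(0,R)$ and $U\oplus V$. The only tactical differences are that the paper extracts the limit direction via compactness of $S_A(T)$ applied to the normalized Duhamel identity and moves the semigroup to the adjoint side in the scalar product, whereas you bound $Qu_n$ directly through invertibility of $I-S_{\tilde A}(T)|_M$ and use commutation of $P$ with $S_{\tilde A}(t)$ (which you correctly justify via the adjoint semigroup); both routes are equivalent.
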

We shall use the following lemma
\begin{Lem}\label{lem-reso}
If $f \colon  [0,+\infty)\times\Omega\times \mathbb{R} \to \mathbb{R}$ satisfies (\ref{lazer}), then there is $R_0 > 0$ such that $g(u) \neq 0$ for $u\in N_\lambda $ with $\|u\| \ge R_0$.
\end{Lem}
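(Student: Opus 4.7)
The plan is to argue by contradiction, using finite-dimensionality of $N_\lambda$ to extract a normalized limit point and then pass to the limit in the identity $\langle g(u_n), v_n\rangle = 0$ via the dominated convergence theorem. The Landesman--Lazer condition (\ref{lazer}) evaluated at the limit will then yield a contradiction.

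More precisely, suppose the conclusion fails. Then there is a sequence $(u_n)\subset N_\lambda$ with $\|u_n\|\to +\infty$ and $g(u_n) = 0$ for every $n\ge 1$. Set $v_n := u_n/\|u_n\|$, so that $v_n\in N_\lambda$ and $\|v_n\|=1$. Since $\dim N_\lambda < +\infty$, the unit sphere of $N_\lambda$ is compact, so after passing to a subsequence we may assume $v_n\to v$ in $X$ for some $v\in N_\lambda$ with $\|v\|=1$. Passing to a further subsequence, we also arrange that $v_n(x)\to v(x)$ for a.e.\ $x\in\Omega$. Consequently, for a.e.\ $x\in\{v>0\}$ we have $u_n(x)=\|u_n\|v_n(x)\to +\infty$, and symmetrically for a.e.\ $x\in\{v<0\}$ we have $u_n(x)\to -\infty$; by hypothesis (d) on $f$ this gives
\[
f(t,x,u_n(x)) \longrightarrow f_+(t,x)\ \text{on}\ \{v>0\}, \qquad f(t,x,u_n(x)) \longrightarrow f_-(t,x)\ \text{on}\ \{v<0\}.
\]

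Now using (\ref{gr3}) and $g(u_n)=0$, we have
\[
0 = \langle g(u_n), v_n\rangle = \int_0^T\!\!\!\int_\Omega f(t,x,u_n(x))\,v_n(x)\,dx\,dt.
\]
To take $n\to\infty$ I would split the integrand as $f(t,x,u_n(x))\,v(x) + f(t,x,u_n(x))\,(v_n(x)-v(x))$. The second piece vanishes in the limit because $|f|\le m$ together with the Cauchy--Schwarz inequality gives the bound $mT|\Omega|^{1/2}\|v_n-v\|\to 0$. For the first piece, the dominating function $m|v(x)|\in L^1([0,T]\times\Omega)$ (since $v\in L^2(\Omega)$ and $\Omega$ is bounded) together with the pointwise convergence above (noting the set $\{v=0\}$ contributes $0$ in the limit) justifies the dominated convergence theorem, yielding
\[
0 = \int_0^T\!\!\!\int_{\{v>0\}} f_+(t,x)v(x)\,dx\,dt + \int_0^T\!\!\!\int_{\{v<0\}} f_-(t,x)v(x)\,dx\,dt.
\]
Since $v\in N_\lambda$ and $\|v\|=1$, this contradicts (\ref{lazer}), completing the proof.

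The main point of care is the passage to the limit: one cannot directly dominate $f(t,x,u_n(x))v_n(x)$ uniformly in $n$ by a fixed $L^1$ function (the $v_n$ are only $L^2$-bounded), which is why the splitting trick above is essential. The pointwise-a.e.\ extraction relies on the finite-dimensionality of $N_\lambda$, where $L^2$-convergence implies convergence in any equivalent norm and, along a subsequence, a.e.\ convergence.
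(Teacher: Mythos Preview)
Your proof is correct and follows essentially the same contradiction argument as the paper: normalize, extract a limit in the finite-dimensional kernel, and pass to the limit via dominated convergence to contradict (\ref{lazer}). One minor simplification worth noting: since $g(u_n)=0$, you may pair directly with the limit $v$ rather than with $v_n$, which makes your splitting trick unnecessary---this is precisely what the paper does, so the ``essential'' caveat in your final paragraph is overstated.
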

\begin{proof}
Suppose the assertion is false. Then there is a sequence $(u_n)\subset N_\lambda $ such that $g(u_n) = 0$ for $n\ge 1$ and $\|u_n\| \to +\infty$ as $n\to +\infty$. Define $z_n := u_n/\|u_n\|$ for $n\ge 1$. Since $(z_n)\subset N_\lambda $ and $N_\lambda $ is a finite dimensional space, $(z_n)$ is relatively compact. We can assume that there is $z_0\in N_\lambda $ with $\|z_0\| = 1$ such that $z_n \to z_0$ as $n\to +\infty$. Additionally, we can suppose that $z_n(x) \to z_0(x)$ as $n\to +\infty$ for almost every $x\in\Omega$. Let
\begin{equation}\label{ozn}
\Omega_+ := \{x\in\Omega \ | \ z_0(x) > 0\} \qquad\text{and}\qquad
\Omega_- := \{x\in\Omega \ | \ z_0(x) < 0\}.
\end{equation}
Then, by (\ref{gr3}), we have
\begin{equation*}
0 = \langle g(u_n),z_0 \rangle = \int_0^T \!\!\! \int_\Omega f(t,x,u_n(x))z_0(x) \,d x dt,\qquad\text{for}\quad n\ge 1
\end{equation*}
and therefore
\begin{equation}\label{w2}
\int_0^T \!\!\! \int_{\Omega_+} f(t,x,z_n(x)\|u_n\|)z_0(x) \,d x dt
 + \int_0^T \!\!\! \int_{\Omega_-} f(t,x,z_n(x)\|u_n\|)z_0(x) \,d x dt = 0,
\end{equation}
for $n\ge 1$. Note that, for fixed $t\in [0,T]$, the convergence $f(t,x,z_n(x)\|u_n\|) \to f_+(t,x)$ by $n\to +\infty$ occurs for almost every $x\in\Omega_+$. Since the domain $\Omega$ has finite measure, $z_0\in L^2(\Omega)\subset L^1(\Omega)$. From the boundedness of $f$ and the dominated convergence theorem, we infer that, for any $t\in [0,T]$,
\begin{equation}\label{lll1}
\int_{\Omega_+} f(t,x,z_n(x)\|u_n\|) z_0(x)\,d x \to \int_{\Omega_+} f_+(t,x) z_0(x)\,d x \qquad\mathrm{as}\quad n\to +\infty.
\end{equation}
The function $\varphi_n^+\colon [0,T] \to \mathbb{R}$ given by
\begin{equation*}
\varphi_n^+(t):=\int_{\Omega_+} f(t,x,z_n(x)\|u_n\|) z_0(x)\,d x = \langle F(t,u_n),\max(z_0,0) \rangle \qquad\mathrm{for}\quad t\in[0,T]
\end{equation*}
is continuous and furthermore $|\varphi_n^+(t)| \le m\|z_0\|_{L^1(\Omega)} < +\infty$ for $t\in [0,T]$. By use of (\ref{lll1}) and the dominated convergence theorem, we deduce that
\begin{equation*}
\int_0^T \!\!\! \int_{\Omega_+} f(t,x,z_n(x)\|u_n\|) z_0(x)\,d x dt \to \int_0^T \!\!\! \int_{\Omega_+} f_+(t,x) z_0(x)\,d x dt
\end{equation*}
as $n\to +\infty$. Proceeding in the same way, we also find that
\begin{equation*}
\int_0^T \!\!\! \int_{\Omega_-} f(t,x,z_n(x)\|u_n\|) z_0(x)\,d x dt \to \int_0^T \!\!\! \int_{\Omega_-} f_-(t,x) z_0(x)\,d x dt
\end{equation*}
as $n\to +\infty$. In consequence, after passing to the limit in (\ref{w2})
\begin{equation*}
\int_0^T \!\!\! \int_{\Omega_+} f_+(t,x) z_0(x)\,d x dt + \int_0^T \!\!\! \int_{\Omega_-} f_-(t,x) z_0(x)\,d x dt = 0
\end{equation*}
for $z_0\in N_\lambda$ with $\|z_0\| = 1$, contrary to (\ref{lazer}), which completes the proof.
\end{proof}
\begin{proof}[Proof of Theorem \ref{th-reso}]
Consider the following differential problem
\begin{equation*}
\dot u(t) = - A u(t) + \lambda u(t) +\varepsilon F(t,u(t)),  \qquad t > 0
\end{equation*}
where $\varepsilon$ is a parameter from $[0,1]$ and let $\Upsilon_t\colon [0,1]\times X \to X$ be the translations along trajectories operator for this equation.  The previous lemma shows that there is $R_0 > 0$ such that $g(u) \neq 0$ for $u\in N_\lambda $ with $\|u\| \ge R_0$. We claim that there is $R_1 \ge R_0$ such that
\begin{equation}\label{row1aa}
\Upsilon_T(\varepsilon,u) \neq u \qquad\mathrm{for}\quad \varepsilon \in (0,1], \quad u\in X, \ \|u\| \ge R_1.
\end{equation}
Conversely, suppose that there are sequences $(\varepsilon_n)$ in $(0,1]$ and $(u_n)$ in $X$ such that
\begin{equation}\label{row1}
\Upsilon_T(\varepsilon_n,u_n) = u_n \qquad\mathrm{for}\quad n\ge 1
\end{equation}
and $\|u_n\| \to + \infty$ as $n\to + \infty$. For every $n\ge 1$, set
$w_n := w(\,\cdot \, ; \varepsilon_n, u_n)$ where $w(\,\cdot \, ; \varepsilon, u)$ is a mild solution of
$$\dot w(t) = - A w(t) + \lambda w(t) +\varepsilon F(t,w(t))$$
starting at $u$. Then
\begin{equation}\label{v1}
w_n(t) = e^{\lambda  t}S_A(t)u_n + \varepsilon_n  \int_0^t e^{\lambda  (t - s)}S_A(t - s)F(s,w_n(s)) \,d s
\end{equation}
for $n\ge1 $ and $t\in[0,+\infty)$. Putting $t := T$ in the above equation, by (\ref{row1}), we infer that
\begin{equation}\label{v3}
z_n = e^{\lambda T }S_A(T)z_n + v_n(T),
\end{equation}
with $z_n := u_n /\|u_n\|$ and
\begin{equation*}
v_n(t) := \frac{\varepsilon_n}{\|u_n\|} \int_0^t e^{\lambda  (t - s)}S_A(t - s)F(s,w_n(s)) \,d s \qquad\mathrm{for}\quad n\ge 1, \quad t\in[0,+\infty).
\end{equation*}
Observe that, for any $t\in [0,T]$ and $n\ge 1$, we have
\begin{equation}\label{v2}
\|v_n(t)\| \le \frac{1}{\|u_n\|}\int_0^t M e^{(\omega + \lambda)(t - s)}\|F(s,w_n(s))\| \,d s
\le m  \nu(\Omega)^{1/2}M e^{T(|\omega| + |\lambda|)}/\|u_n\|
\end{equation}
where the constants $M\ge 1$ and $\omega\in\mathbb{R}$ are such that $\|S_A(t)\|\le M e^{\omega t}$ for $t\ge 0$ and $\nu$ stands for the Lebesgue measure. Hence
\begin{equation}\label{v4}
v_n(t) \to 0 \qquad\mathrm{for}\quad t\in [0,T] \qquad\mathrm{as}\quad n\to +\infty,
\end{equation}
and, in particular, set $\{v_n(T)\}_{n\ge 1}$ is relatively compact. In view of (\ref{v3})
\begin{equation}\label{row2}
\{z_n\}_{n\ge 1} \subset e^{\lambda T }S_A(T)\left(\{z_n\}_{n\ge 1}\right) + \{v_n(T)\}_{n\ge 1},
\end{equation}
and therefore, by the compactness of $\{S_A(t)\}_{t\ge 0}$ we see that $\{z_n\}_{n\ge 1}$ has a convergent subsequence. Without loss of generality we may assume that
$z_n \to z_0$ as $n\to +\infty$ and $z_n(x) \to z_0(x)$ for almost every $x\in\Omega$, where $z_0\in X$ is such that $\|z_0\| = 1$. Passing to the limit in (\ref{v3}), as $n\to +\infty$, and using (\ref{v4}), we find that $z_0 = e^{\lambda T }S_A(T)z_0$, hence that $z_0 \in \mathrm{Ker}\,(I - e^{\lambda T}S_A(T))$ and finally, by condition (A3), that
\begin{equation}\label{rn}
z_0 \in \mathrm{Ker}\,(\lambda I - A) = \mathrm{Ker}\,(\lambda I - A^*).
\end{equation}
Thus Remark \ref{ker-a} (a)  leads to
\begin{equation}\label{v5}
z_0 \in \mathrm{Ker}\,(I - e^{\lambda  t}S_A(t)) \qquad\mathrm{for}\quad t\ge 0.
\end{equation}
From (\ref{v1}) we deduce that
\begin{equation*}
\frac{1}{\|u_n\|}(w_n(t) - u_n) = e^{\lambda  t}S_A(t)z_n - z_n + v_n(t) \qquad\mathrm{for}\quad t\in [0,T],
\end{equation*}
which by (\ref{v4}) and (\ref{v5}) gives
\begin{equation}\label{v9}
\frac{1}{\|u_n\|}(w_n(t) - u_n) \to 0 \qquad\mathrm{for}\quad  t\in [0,T] \quad \text{ as } n\to +\infty.
\end{equation}
If we again take $t := T$ in (\ref{v1}) and act with the scalar product operation $\langle\,\cdot \, , z_0\rangle$, we obtain
\begin{equation*}
\langle u_n,z_0 \rangle =  \langle e^{\lambda T}S_A(T)u_n, z_0 \rangle + \varepsilon_n  \int_0^T e^{\lambda (T - s)} \langle S_A(T - s)F(s,w_n(s)),z_0 \rangle \,d s.
\end{equation*}
Since $X$ is Hilbert space, by \cite[Corollary 1.10.6]{Pazy}, the family $\{S_A(t)^*\}_{t\ge0}$ of the adjoint operators is a $C_0$ semigroup on $X$ with the generator $-A^*$, i.e.
\begin{equation}\label{row4}
S_A(t)^* = S_{A^*}(t) \qquad\mathrm{for}\quad t\ge 0.
\end{equation}
Remark \ref{ker-a} (a) and (\ref{rn}) imply that $z_0\in\mathrm{Ker}\,(I - e^{\lambda t }S_{A^*}(t))$ for $t\ge 0$ and consequently, by (\ref{row4}), $z_0\in\mathrm{Ker}\,(I - e^{\lambda t }S_{A}(t)^*)$ for $t\ge 0$. Thus
\begin{align*}
\langle u_n,z_0 \rangle & = \langle u_n,e^{\lambda T} S_{A}(T)^*z_0 \rangle + \varepsilon_n  \int_0^T e^{\lambda  (T - s)} \langle F(s,w_n(s)), S_A(T - s)^* z_0 \rangle \,d s \\
& = \langle u_n,z_0 \rangle + \varepsilon_n  \int_0^T \langle F(s,w_n(s)), z_0 \rangle \,d s,
\end{align*}
and therefore
\begin{equation*}
\int_0^T \langle F(s,w_n(s)), z_0 \rangle \,d s = 0 \qquad\mathrm{for}\quad n\ge 1.
\end{equation*}
We have further
\begin{multline} \label{v14}
0 = \int_0^T \!\!\! \int_\Omega f(s,x,w_n(s)(x))z_0(x) \,d x ds \\
= \int_0^T \!\!\! \int_{\Omega_+} f(s,x,w_n(s)(x))z_0(x) \,d x ds
+ \int_0^T \!\!\! \int_{\Omega_-} f(s,x,w_n(s)(x))z_0(x) \,d x ds,
\end{multline}
where the sets $\Omega_+$ and $\Omega_-$ are given by (\ref{ozn}). Given $s\in[0,T]$, we claim that
\begin{equation}\label{v11}
\varphi_n^+(s):=\int_{\Omega_+} f(s,x,w_n(s)(x))z_0(x) \,d x \to
\int_{\Omega_+} f_+(s,x) z_0(x) \,d x
\end{equation}
and
\begin{equation}\label{v12}
\varphi_n^-(s):=\int_{\Omega_-} f(s,x,w_n(s)(x))z_0(x) \,d x \to
\int_{\Omega_-} f_-(s,x) z_0(x) \,d x
\end{equation}
as $n\to \infty$. Since the proofs of (\ref{v11}) and (\ref{v12}) are analogous, we consider only the former limit. We show that every sequence $(n_k)$ of natural numbers has a subsequence $(n_{k_l})$ such that
\begin{equation}\label{row5}
\int_{\Omega_+} f(s,x,(h_{n_{k_l}}(s,x) + z_{n_{k_l}}(x))\|u_{n_{k_l}}\|)z_0(x) \,d x \to
\int_{\Omega_+} f_+(s,x) z_0(x) \,d x
\end{equation}
as $n\to +\infty$ with
\[h_n(s,x) := (w_n(s)(x) - u_n(x))/\|u_n\| \qquad\mathrm{for}\quad x\in\Omega, \quad n\ge 1.\]
Due to (\ref{v9}), one can choose a subsequence $(h_{n_{k_l}}(s,\,\cdot\,))$ of $(h_{n_k}(s, \, \cdot \,))$ such that $h_{n_{k_l}}(s,x) \to 0$ for almost every $x\in\Omega$. Hence
\begin{equation}
h_{n_{k_l}}(s,x) + z_{n_{k_l}}(x) \to z_0(x) > 0 \qquad\mathrm{as}\quad n\to +\infty
\end{equation}
for almost every $x\in \Omega_+$ and consequently
\begin{equation}
f(s,x,(h_{n_{k_l}}(s,x) + z_{n_{k_l}}(x))\|u_{n_{k_l}}\|) \to  f_+(s,x) \qquad\mathrm{as}\quad n\to +\infty
\end{equation}
for almost every $x\in \Omega_+$. Since $z_0\in L^2(\Omega)\subset L^1(\Omega)$ and $f$ is bounded, from the Lebesgue dominated convergence theorem, we have the convergence (\ref{row5}) and hence (\ref{v11}). Further, for any $s\in [0,T]$ and $n\ge 1$, one has
\begin{align}\label{v-bou}
|\varphi_n^+(s)| & \le \int_{\Omega_+} |f(s,x,w_n(s)(x))z_0(x)| \,d x
\le m\int_{\Omega_+} |z_0(x)| \,d x \le m \|z_0\|_{L^1(\Omega)}.
\end{align}
and similarly
\begin{equation}\label{v-bou2}
|\varphi_n^-(s)| \le m \|z_0\|_{L^1(\Omega)}\qquad\text{for}\quad t\in[0,T]\quad\text{and}\quad n\ge 1.
\end{equation}
Since
\begin{equation*}
\varphi_n^+(s)= \langle F(s,w_n(s)), \max(z_0,0) \rangle \quad\text{ and }\quad \varphi_n^-(s)= \langle F(s,w_n(s)), \min(z_0,0) \rangle
\end{equation*}
for $s\in[0,T]$ and $n\ge 1$, functions $\varphi_n^+$ and $\varphi_n^-$ are continuous on $[0,T]$. Using (\ref{v11}), (\ref{v12}), (\ref{v-bou}), (\ref{v-bou2}) and the dominated convergence theorem, after passing to the limit in (\ref{v14}), we infer that
\begin{equation}
\int_0^T \!\!\! \int_{\Omega_+} f_+(s,x) z_0(x) \,d x ds + \int_0^T \!\!\! \int_{\Omega_-} f_-(s,x) z_0(x) \,d x ds = 0,
\end{equation}
which contradicts (\ref{lazer}), since $z_0\in N_\lambda$ and $\|z_0\| = 1$ and, in consequence, proves (\ref{row1aa}). \\
\indent Let $R:=R_1$. By the homotopy invariance of topological degree, for any $\varepsilon\in (0,1]$, we have
\begin{align}\label{w7}
\mathrm{deg_{LS}}(I - \Psi_T, B(0,R)) & =\mathrm{deg_{LS}}(I - \Upsilon_T(1, \,\cdot \, ), B(0,R)) \\ & = \mathrm{deg_{LS}}(I - \Upsilon_T(\varepsilon, \, \cdot \, ), B(0,R)) \notag.
\end{align}
Since $A$ has compact resolvents $\mathrm{Ker}\,(A^* - \lambda I )^\bot = \mathrm{Im}\,(A - \lambda I )$ and therefore, by (A3), $X$ admits the direct sum decomposition $$X = N_\lambda\oplus \mathrm{Im}\,(A - \lambda I ).$$ Clearly the range and kernel of $A$ are invariant under $S_A(t)$ for $t\ge 0$, hence putting $M:=\mathrm{Im}\,(\lambda I - A)$, condition (A2) is satisfied for $A - \lambda I$. Moreover $R \ge R_0$ and therefore, we also have that $g(u) \neq 0$ for $u\in N_\lambda $ with $\|u\| \ge R$. Let $U := B(0,R)\cap N_\lambda $ and $V := B(0,R) \cap M$. Then $g(u) \neq 0$ for $u\in \partial_{N_\lambda} U$ and clearly
\begin{equation}\label{f9}
B(0,R) \subset U\oplus V.
\end{equation}
Therefore, by Theorem \ref{th-aver-ker}, there is $\varepsilon_0 \in (0,1)$ such that, for any $\varepsilon \in (0,\varepsilon_0]$ and $u\in \partial(U \oplus V)$, $\Upsilon_T(\varepsilon,u)\neq u$ and
\begin{equation}\label{w9}
\mathrm{deg_{LS}}(I - \Upsilon_T(\varepsilon,\,\cdot\,), U \oplus V) = (-1)^{\mu(\lambda) + \dim N_\lambda }\deg_B(g, U),
\end{equation}
where $\mu(\lambda)$ is the sum of algebraic multiplicities of eigenvalues of $S_{A - \lambda I}(T)$ in $(1,+\infty)$. In view of (\ref{f9}) and the fact that $R = R_1$ satisfies (\ref{row1aa}), we infer that \[\{u\in U\oplus V \ | \ \Upsilon_T(\varepsilon_0,u) = u\} \subset B(0,R)\]
and, by the excision property,
\begin{equation}\label{w11}
\mathrm{deg_{LS}}(I - \Upsilon_T(\varepsilon_0,\,\cdot\,), U \oplus V)
= \mathrm{deg_{LS}}(I - \Upsilon_T(\varepsilon_0,\,\cdot\,), B(0,R)).
\end{equation}
Combining (\ref{w9}) with (\ref{w11}) yields
\begin{equation}\label{w10}
\mathrm{deg_{LS}}(I - \Upsilon_T(\varepsilon_0,\,\cdot\,), B(0,R)) = (-1)^{\mu(\lambda) + \dim N_\lambda }\,\mathrm{deg_B}(g, U),
\end{equation}
which together with (\ref{w7}) implies
\begin{equation}\label{w12}
\mathrm{deg_{LS}}(I - \Psi_T, B(0,R)) = (-1)^{\mu(\lambda) + \dim N_\lambda }\,\mathrm{deg_B}(g, U)
\end{equation}
and the proof is complete.
\end{proof}

The following proposition allows us to determine the Brouwer degree of the mapping $g$.

\begin{Prop}\label{th-guid-fun}
\ \\[2mm]
\makebox[10mm][r]{(i)} \parbox[t]{138mm}{If condition (\ref{lazer1}) holds then there is $R_0 > 0$ such that $g(u)\neq 0$ for $u\in N_\lambda $ with $\|u\| \ge R_0$ and \\[-3mm]
\begin{equation*}
\mathrm{deg_B}(g, B(0,R)) = 1 \qquad\mathrm{for}\quad R\ge R_0.
\end{equation*}} \\
\makebox[10mm][r]{(ii)} \parbox[t]{138mm}{If condition (\ref{lazer2}) holds then  there is $R_0 > 0$ such that $g(u)\neq 0$ for $u\in N_\lambda $ with $\|u\| \ge R_0$ and \\[-3mm]
\begin{equation*}
\mathrm{deg_B}(g, B(0,R)) = (-1)^{\dim N_\lambda } \qquad\mathrm{for}\quad R\ge R_0.
\end{equation*}}
\end{Prop}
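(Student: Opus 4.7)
The plan is to deduce the non-vanishing of $g$ outside a large ball directly from Lemma \ref{lem-reso} (since each of (\ref{lazer1}) and (\ref{lazer2}) is a strict-sign strengthening of (\ref{lazer})), and then to compute the Brouwer degree by means of a linear homotopy inside the finite-dimensional space $N_\lambda$. The key intermediate step is to strengthen the pointwise Landesman--Lazer inequality on the unit sphere of $N_\lambda$ to the sign condition
\[
\langle g(u), u \rangle > 0 \qquad (\text{respectively } \langle g(u), u \rangle < 0)
\]
for every $u \in N_\lambda$ with $\|u\|$ sufficiently large.

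To obtain this sign estimate in case (i), I would argue by contradiction, in the spirit of Lemma \ref{lem-reso}. Assume there is a sequence $(u_n) \subset N_\lambda$ with $\|u_n\| \to +\infty$ and $\langle g(u_n), u_n \rangle \le 0$. Putting $z_n := u_n/\|u_n\|$ and exploiting $\dim N_\lambda < \infty$, one passes to a subsequence with $z_n \to z_0$ in $N_\lambda$, $\|z_0\|=1$, and $z_n(x) \to z_0(x)$ for almost every $x \in \Omega$. Using (\ref{gr3}),
\[
\frac{1}{\|u_n\|}\langle g(u_n), u_n\rangle = \int_0^T \!\!\! \int_\Omega f(t,x,\|u_n\|z_n(x))\,z_n(x)\,dx\,dt.
\]
Replacing $z_n(x)$ by $z_0(x)$ in the integrand produces an error controlled by $m\,T\,\nu(\Omega)^{1/2}\|z_n - z_0\|$, which tends to $0$. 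Splitting the remaining integral over $\Omega_+ := \{z_0 > 0\}$ and $\Omega_- := \{z_0 < 0\}$, observing that $\|u_n\|z_n(x) \to \pm\infty$ for a.e.\ $x \in \Omega_\pm$, and invoking the dominated convergence theorem exactly as in Lemma \ref{lem-reso}, the limit equals
\[
\int_0^T \!\!\! \int_{\Omega_+} f_+(t,x)\,z_0(x)\,dx\,dt + \int_0^T \!\!\! \int_{\Omega_-} f_-(t,x)\,z_0(x)\,dx\,dt,
\]
which is strictly positive by (\ref{lazer1}). This contradicts $\langle g(u_n), u_n\rangle \le 0$ and yields the desired $R_0$. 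Case (ii) follows identically with all signs reversed.

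Once the sign condition is at hand, the degree computation reduces to a one-line linear homotopy. For (i), define $H\colon [0,1]\times N_\lambda \to N_\lambda$ by $H(s,u) := (1-s)g(u) + su$. If $H(s,u) = 0$ for some $s\in[0,1]$ and some $u$ with $\|u\| = R \ge R_0$, then pairing with $u$ gives $(1-s)\langle g(u),u\rangle + s\|u\|^2 = 0$; both summands are non-negative (the first by the sign condition), so both vanish, which forces $s = 0$ together with $\langle g(u),u\rangle = 0$, contradicting strict positivity. Homotopy invariance of the Brouwer degree therefore gives
\[
\mathrm{deg_B}(g, B(0,R)) = \mathrm{deg_B}(I, B(0,R)) = 1.
\]
For (ii) the analogous homotopy $H(s,u) := (1-s)g(u) - su$ is admissible by the same pairing trick (now $(1-s)\langle g(u),u\rangle$ and $-s\|u\|^2$ are both non-positive), yielding $\mathrm{deg_B}(g, B(0,R)) = \mathrm{deg_B}(-I, B(0,R)) = (-1)^{\dim N_\lambda}$.

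The main obstacle is the uniformity step: promoting the pointwise sign assumptions (\ref{lazer1}) and (\ref{lazer2}) on the unit sphere of $N_\lambda$ to a sign estimate valid for all large $u$, without losing strictness. This is feasible precisely because the unit sphere in $N_\lambda$ is compact (since $\dim N_\lambda < \infty$), because $L^2(\Omega) \hookrightarrow L^1(\Omega)$ via $\nu(\Omega) < \infty$, and because $f$ is bounded, so the dominated-convergence scheme of Lemma \ref{lem-reso} transfers with only cosmetic modifications.
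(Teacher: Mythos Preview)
Your proposal is correct and follows essentially the same approach as the paper: establish the sign condition $\langle g(u),u\rangle > 0$ (resp.\ $<0$) for large $\|u\|$ by the same contradiction-and-dominated-convergence argument, then connect $g$ to $\pm I$ by the linear homotopy and read off the degree. The only differences from the paper are cosmetic (you parameterize the homotopy with $s$ and $1-s$ swapped, and you phrase the $z_n\to z_0$ replacement as an explicit $L^1$ error bound rather than as the inner product $\langle g(u_n),z_n-z_0\rangle$).
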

\begin{proof}
(i) We begin by proving that there exists $R_0 > 0$ such that
\begin{equation}\label{gf1}
\langle g(u),u \rangle > 0 \qquad\mathrm{for}\quad u\in N_\lambda , \ \|u\| \ge R_0.
\end{equation}
Arguing by contradiction, suppose that there is a sequence $(u_n)\subset N_\lambda $ such that $\|u_n\| \to +\infty$ as $n\to +\infty$ and $\langle g(u_n),u_n \rangle \le 0$, for $n \ge 1$. For every $n\ge 1$, write $z_n := u_n/\|u_n\|$. Since $(z_n)$ is bounded and contained in the finite dimensional space $N_\lambda $, it contains a convergent subsequence. Without loss of generality we may assume that there is $z_0\in N_\lambda $ with $\|z_0\| = 1$ such that $z_n \to z_0$ as $n\to +\infty$ and $z_n(x) \to z_0(x)$ as $n\to +\infty$ for almost every $x\in\Omega$. Recalling the notational convention (\ref{ozn}), we have
\begin{align}\label{gr4}
0 \ge \langle g(u_n),z_n \rangle & = \langle g(u_n),z_n - z_0 \rangle + \langle g(u_n),z_0 \rangle \\ \nonumber
& =  \int_0^T \!\!\! \int_\Omega f(t,x,u_n(x))z_0(x) \,d x dt + \langle g(u_n),z_n - z_0 \rangle \\ \nonumber
& =  \int_0^T \!\!\!\int_{\Omega_+} f(t,x,z_n(x)\|u_n\|)z_0(x) \,d x dt \\ \nonumber
& \quad + \int_0^T \!\!\! \int_{\Omega_-} f(t,x,z_n(x)\|u_n\|)z_0(x) \,d x dt + \langle g(u_n),z_n - z_0 \rangle.
\end{align}
On the other hand, if we fix $t\in [0,T]$, then, by the condition (d), we have
\begin{equation}\label{l1}
f(t,x,z_n(x)\|u_n\|) \to f_+(t,x) \qquad\mathrm{as}\quad n\to +\infty
\end{equation}
for almost every $x\in \Omega_+$. Since $f$ is assumed to be bounded and $z_0\in L^1(\Omega)$, by the dominated convergence theorem, (\ref{l1}) shows that
\begin{equation}\label{lll1a}
\int_{\Omega_+} f(t,x,z_n(x)\|u_n\|) z_0(x)\,d x \to
\int_{\Omega_+} f_+(t,x) z_0(x)\,d x
\end{equation}
as $n\to \infty$. Let $\varphi_n^+\colon [0,T] \to \mathbb{R}$ be
given by
\begin{equation*}
\varphi_n^+(t):=\int_{\Omega_+} f(t,x,z_n(x)\|u_n\|) z_0(x)\,d x = \langle F(t,u_n),\max(z_0,0) \rangle
\end{equation*}
for $t\in [0,T]$. The function $\varphi_n^+$ is evidently continuous and $|\varphi_n^+(t)| \le m\|z_0\|_{L^1(\Omega)}$ for $t\in [0,T]$. Applying (\ref{lll1a}) and the dominated convergence theorem, we find that
\begin{equation}\label{gg1}
\int_0^T \!\!\! \int_{\Omega_+} f(t,x,z_n(x)\|u_n\|)z_0(x) \,d x dt
\to \int_0^T \!\!\!\int_{\Omega_+} f_+(t,x) \,d x dt,
\end{equation}
as $n\to +\infty$. Proceeding in the same way, we infer that
\begin{equation}\label{gg2}
\int_0^T \!\!\! \int_{\Omega_-} f(t,x,z_n(x)\|u_n\|)z_0(x) \,d x dt
\to \int_0^T \!\!\!\int_{\Omega_-} f_-(t,x) \,d x dt,
\end{equation}
as $n\to +\infty$. Since the sequence $(g(u_n))$ is bounded, we see that
\begin{equation}\label{ja19}
|\langle g(u_n),z_n - z_0 \rangle | \le \|g(u_n)\|\|z_n - z_0\| \to 0 \qquad\mathrm{as}\quad n\to +\infty.
\end{equation}
By (\ref{gg1}), (\ref{gg2}), (\ref{ja19}), letting $n\to +\infty$ in (\ref{gr4}), we assert that
\begin{equation*}
\int_0^T \!\!\! \int_{\Omega_+} f_+(t,x) z_0(x) \,d x dt
+ \int_0^T \!\!\!\int_{\Omega_-} f_-(t,x) z_0(x) \,d x dt \le 0,
\end{equation*}
contrary to (\ref{lazer1}). \\
\indent Now, for any $R > R_0$, the mapping $H\colon [0,1]\times N_\lambda
\to N_\lambda $ given by \[H(s,u) := s g(u) + (1 - s)u \qquad\text{for}\quad
u\in N_\lambda ,\] has no zeros for $s\in [0,1]$ and $u\in N_\lambda $ with $\|u\| =
R$. If it were not true, then there would be $H(s,u) = 0$, for
some $s\in [0,1]$ and $u\in N_\lambda $ with $\|u\| = R$, and in
consequence,
\begin{align*}
0 = \langle H(s,u),u \rangle = s \langle g(u),u \rangle + (1 - s) \langle u,u \rangle.
\end{align*}
If $s = 0$ then $0=\|u\|^2 = R^2$, which is impossible. If $s\in (0,1]$, then
$0 \ge \langle g(u),u \rangle$, which contradicts (\ref{gf1}). By the homotopy invariance of the topological degree
\begin{align*}
\mathrm{deg_B}(g, B(0,R)) & = \mathrm{deg_B}(H(1,\,\cdot\,), B(0,R)) = \mathrm{deg_B}(H(0,\,\cdot\,), B(0,R)) \\
& = \mathrm{deg_B}(I, B(0,R)) = 1,
\end{align*}
and the proof of (i) is complete. \\
(ii) Proceeding by analogy to (i), we obtain the existence of $R_0 > 0$ such that
\begin{equation}\label{gf11}
\langle g(u),u \rangle < 0 \qquad\mathrm{for}\quad \|u\| \ge R_0.
\end{equation}
This implies, that for every $R > R_0$, the homotopy $H\colon
[0,1]\times N_\lambda  \to N_\lambda $ given by
\[H(s,u) := s g(u) - (1 - s)u \qquad\text{for}\quad
u\in N_\lambda \] is such that $H(s,u) \neq 0$ for $s\in [0,1]$ and $u\in
N_\lambda $ with $\|u\| = R$. Indeed, if $H(s,u) = 0$ for some $s\in [0,1]$
and $u\in N_\lambda $ with $\|u\| = R$, then
\begin{align*}
0 = \langle H(s,u),u \rangle = s \langle g(u),u \rangle - (1 - s)\langle u,u \rangle.
\end{align*}
Hence, if $s\in (0,1]$, then $\langle g(u),u \rangle \ge 0$, contrary to (\ref{gf11}). If $s=0$, then
$R^2 = \|u\|^2 = 0$, and again a contradiction. In consequence, by the homotopy invariance,
\begin{equation*}
\mathrm{deg_B}(g, B(0,R)) = \mathrm{deg_B}(-I, B(0,R)) = (-1)^{\dim N_\lambda },
\end{equation*}
as desired.
\end{proof}
\begin{proof}[Proof Theorem \ref{th-guid-fun1} ]
Theorem \ref{th-reso} asserts that there is $R>0$ such that $\Psi_T(u)\neq u$ for $u\in X\setminus B(0,R)$, $g(u) \neq 0$ for $u\in N_\lambda \setminus B(0,R)$ and
\begin{equation}\label{ll3b}
\mathrm{deg_{LS}}(I - \Psi_T, B(0,R)) = (-1)^{\mu(\lambda) + \dim N_\lambda }\mathrm{deg_B}(g, B(0,R) \cap N_\lambda).
\end{equation}
In view of Proposition \ref{th-guid-fun}, we obtain the existence of $R_0 > R$ such that either $\deg(g, B(0,R_0) \cap N_\lambda ) = 1$, when (\ref{lazer1}) is satisfied, or
$\deg(g, B(0,R_0) \cap N_\lambda ) = (-1)^{\dim N_\lambda }$, in the case of condition (\ref{lazer2}).
By the inclusion $\{u\in B(0,R_0)\cap N_\lambda  \ | \ g(u) = 0\}\subset B(0,R)\cap N_\lambda $, we see that $$\deg(g, B(0,R) \cap N_\lambda ) = \deg(g, B(0,R_0) \cap N_\lambda ) = \pm 1$$ and, by (\ref{ll3b}),
\begin{align*}
\mathrm{deg_{LS}}(I - \Psi_T, B(0,R)) = (-1)^{\mu(\lambda) + \dim N_\lambda }\mathrm{deg_B}(g, B(0,R)\cap N_\lambda ) = \pm 1.
\end{align*}
Thus, by the existence property, we find that there is a fixed point of $\Psi_T$ and in consequence a $T$-periodic mild solution of (\ref{A-eps-res}).
\end{proof}

In the particular case when the linear operator $A$ is self-adjoint and $-A$ is a generator of a compact $C_0$ semigroup $\{S_A(t)\}_{t\ge 0}$ of bounded linear operators on $X$, the spectrum $\sigma(A)$ is real and consists of eigenvalues $\lambda_1 < \lambda_2 < \lambda_3 < \ldots < \lambda_k < \ldots$ which form a sequence convergent to infinity. By Proposition \ref{th-sem-spec}, for every $t > 0$, $\{e^{- \lambda_k t}\}_{k\ge 1}$ is the sequence of nonzero eigenvalues of $S_A(t)$ and
\begin{equation}\label{re-ker}
\mathrm{Ker}\,(\lambda_k I - A) = \mathrm{Ker}\,(e^{- \lambda_k t}I - S_A(t)) \qquad\mathrm{for}\quad k \ge 1.
\end{equation}
In consequence, we see that (A3) holds.

\begin{Cor}\label{th-reso-adjoint}
Let $A$ be a self-adjoint operator such that $-A$ is a generator of a compact $C_0$ semigroup $\{S_A(t)\}_{t\ge 0}$ and let $f\colon [0,+\infty)\times\Omega\times \mathbb{R} \to \mathbb{R}$ satisfy the Landesman--Lazer type condition (\ref{lazer}). If $\lambda = \lambda_k$ for some $k\ge 1$, then there is $R>0$ such that $\Psi_T(u)\neq u$ for $u\in X\setminus B(0,R)$, $g(u) \neq 0$ for
$u\in N_{\lambda_k} \setminus B(0,R)$ and
\begin{equation}\label{rowad}
\mathrm{deg_{LS}}(I - \Psi_T, B(0,R)) = (-1)^{d_k}\mathrm{deg_B}(g, B(0,R)\cap N_{\lambda_k}),
\end{equation}
where $d_k := \sum_{i=1}^{k-1}\dim \mathrm{Ker}\,(\lambda_i I - A)$ for $k \ge 1$. In particular, if either condition (\ref{lazer1}) or (\ref{lazer2}) is satisfied then (\ref{A-eps-res}) has mild solution.
\end{Cor}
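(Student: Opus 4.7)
The plan is to derive this corollary as a direct specialization of Theorem \ref{th-reso} (and of Theorem \ref{th-guid-fun1} for the last assertion) applied with $\lambda = \lambda_k$. Two ingredients must be checked: hypothesis (A3) in the self-adjoint setting, and the explicit value of the exponent $\mu(\lambda_k)$.

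First I would verify (A3). The equality $\mathrm{Ker}(A - \lambda_k I) = \mathrm{Ker}(A^* - \lambda_k I)$ is immediate from $A = A^*$, and the further equality with $\mathrm{Ker}(I - e^{\lambda_k T} S_A(T))$ is exactly (\ref{re-ker}), already recorded in the discussion preceding the statement as a consequence of Proposition \ref{th-sem-spec}. Thus Theorem \ref{th-reso} applies and produces $R > 0$ such that $\Psi_T(u) \neq u$ on $X \setminus B(0,R)$, $g(u) \neq 0$ on $N_{\lambda_k} \setminus B(0,R)$, and
$$
\mathrm{deg_{LS}}(I - \Psi_T, B(0,R)) = (-1)^{\mu(\lambda_k) + \dim N_{\lambda_k}}\,\mathrm{deg_B}(g, B(0,R) \cap N_{\lambda_k}).
$$

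The main step is then to identify the exponent. By Proposition \ref{th-sem-spec} together with (\ref{re-ker}), the nonzero point spectrum of $e^{\lambda_k T} S_A(T)$ is precisely $\{e^{(\lambda_k - \lambda_i) T}\}_{i \ge 1}$ with eigenspaces $\mathrm{Ker}(\lambda_i I - A)$, and such an eigenvalue lies in $(1, +\infty)$ if and only if $\lambda_i < \lambda_k$, i.e. $i \in \{1, \dots, k-1\}$. Since $A$ is self-adjoint, $S_A(T)$ is also self-adjoint (via functional calculus for $A$) and compact by hypothesis, hence admits no generalized eigenvectors; for each $i < k$ the algebraic multiplicity of $e^{(\lambda_k - \lambda_i) T}$ therefore coincides with the geometric multiplicity $\dim \mathrm{Ker}(\lambda_i I - A)$. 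Summing over $i = 1, \dots, k-1$ gives $\mu(\lambda_k) = d_k$, and inserting this into the display above yields (\ref{rowad}).

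Finally, the existence of a $T$-periodic mild solution under either (\ref{lazer1}) or (\ref{lazer2}) is an immediate application of Theorem \ref{th-guid-fun1}, which is available because (A3) has just been verified. The only nonroutine step is the spectral identification $\mu(\lambda_k) = d_k$; once self-adjointness is used to rule out Jordan structure of $S_A(T)$, the rest is a counting argument, and I do not foresee any serious obstacle.
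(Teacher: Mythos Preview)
Your approach is the same as the paper's: check (A3) via self-adjointness and (\ref{re-ker}), invoke Theorem \ref{th-reso}, and then compute $\mu(\lambda_k)$ from the spectrum of $e^{\lambda_k T}S_A(T)$, using that $S_A(T)$ is self-adjoint so algebraic and geometric multiplicities agree. The final existence assertion via Theorem \ref{th-guid-fun1} (equivalently Proposition \ref{th-guid-fun}) is also handled the same way.

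There is one arithmetic slip at the end. You correctly obtain
\[
\mu(\lambda_k) \;=\; \sum_{i=1}^{k-1}\dim\mathrm{Ker}(\lambda_i I - A),
\]
and with the definition of $d_k$ as stated in the corollary this says $\mu(\lambda_k) = d_k$. But the exponent produced by Theorem \ref{th-reso} is $\mu(\lambda_k) + \dim N_{\lambda_k}$, so substitution gives $(-1)^{d_k + \dim N_{\lambda_k}}$, not $(-1)^{d_k}$; your sentence ``inserting this into the display above yields (\ref{rowad})'' drops the $\dim N_{\lambda_k}$ term. The paper's proof instead sets out to show $d_k = \mu(\lambda_k) + \dim N_{\lambda_k}$ and concludes with $\mu(\lambda_k) = d_k - \dim N_{\lambda_k}$; comparing this with the same displayed sum shows that the intended definition is evidently $d_k = \sum_{i=1}^{k}\dim\mathrm{Ker}(\lambda_i I - A)$, i.e.\ the upper index $k-1$ in the statement is a typo for $k$. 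With that reading your computation and the paper's agree, and (\ref{rowad}) follows.
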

\begin{proof}
To see (\ref{rowad}), it is enough to check that $d_k = \mu(\lambda_k) + \dim N_{\lambda_k}$ for $k\ge 1$. Since
\[e^{(\lambda_k-\lambda_1)T} > e^{(\lambda_k - \lambda_2)T} > \ldots > e^{(\lambda_k - \lambda_{k - 1})T} \]
are eigenvalues of $e^{\lambda_k T}S_A(T)$ which are greater than $1$, for $k = 1$ it is evident that $\mu(\lambda_k) = 0$ and $d_1 = \mu(\lambda_k) + \dim N_{\lambda_k}$. The operator $S_A(T)$ is also self-adjoint and therefore the geometric and the algebraic multiplicity of each eigenvalue coincide. Hence
\begin{equation}\label{war-wl}
\mu(\lambda_k) = \sum_{i=1}^{k-1}\dim\mathrm{Ker}\,(e^{-\lambda_i T}I - S_A(T))\qquad\text{for}\quad k\ge 2.
\end{equation}
From (\ref{re-ker}) and (\ref{war-wl}), we deduce that
\begin{equation*}
\mu(\lambda_k) = \sum_{i=1}^{k-1}\dim \mathrm{Ker}\,(\lambda_i I - A) = d_k - \dim N_{\lambda_k}
\end{equation*}
and finally that $d_k = \mu(\lambda_k) + \dim N_{\lambda_k}$ for every $k\ge 1$, as desired. The formula (\ref{rowad}) together with Proposition \ref{th-guid-fun} leads to existence of mild solution of (\ref{A-eps-res}) provided either condition (\ref{lazer1}) or (\ref{lazer2}) is satisfied.
\end{proof}

\section{Applications}

Let $\Omega\subset\mathbb{R}^n$, $n\ge 1$, be an open bounded connected set with $C^1$ boundary. We recall that $\|\cdot\|$ and $\langle\,\cdot \, , \,\cdot\,\rangle$ denote, similarly as before, the norm and the scalar product on $X = L^2(\Omega)$, respectively. For $u\in H^1(\Omega)$, we will denote by $D_k u$, the $k$-th weak derivative of $u$.
\subsubsection*{Laplacian with the Neumann boundary conditions}
We begin with the $T$-periodic parabolic problem
\begin{equation}\label{del-f-lam}
\left\{ \begin{array}{ll}
\displaystyle \frac{\partial u}{\partial t} = \Delta u + \varepsilon f(t, x, u) & \qquad
\mathrm{in}\quad (0,+\infty)\times\Omega \\[3mm]
\displaystyle \frac{\partial u}{\partial n}(t,x) = 0 & \qquad
\mathrm{on}\quad [0,+\infty)\times\partial\Omega \\[3mm]
u(t,x) = u(t + T,x) & \qquad
\mathrm{in} \quad [0,+\infty)\times\Omega,
\end{array} \right.
\end{equation}
where $\varepsilon \in [0,1]$ is a parameter and $f\colon [0,+\infty)\times\Omega\times\mathbb{R} \to \mathbb{R}$ is a continuous mapping which is required to satisfy conditions (a), (b) and (c) from the previous section. We put (\ref{del-f-lam}) into an abstract setting. To this end let $A\colon D(A) \to X$ be a linear operator such that $-A$ is the Laplacian with the Neumann boundary conditions, i.e.
\begin{align*}
D(A) & := \bigg\{u\in H^1(\Omega) \ | \ \text{ there is }g\in L^2(\Omega) \text{ such that } \\
& \hspace{40mm}\left. \int_\Omega \nabla u \nabla h \ dx = \int_\Omega g h \ dx \ \text{ for } \  h\in H^1(\Omega)\right\}, \\
A u & := g, \text{ where } g \text{ is as above},
\end{align*}
and define $F\colon [0,+\infty)\times X \to X$ to be a mapping given by the formula
\begin{equation}\label{row3}
F(t,u)(x) := f(t,x,u(x)) \qquad\mathrm{for}\quad t\in [0,+\infty),\quad x\in\Omega.
\end{equation}
Then by the assumptions (a) and (b), it is well defined, continuous, bounded and Lipschitz uniformly with respect to time. Problem (\ref{del-f-lam}) may be considered in the abstract form
\begin{equation}\label{A-F-lam2}
\left\{ \begin{array}{ll}
\dot u(t) = - A u(t) +\varepsilon F(t,u(t)), & \qquad t > 0\\[1mm]
u(t)=u(t + T) &  \qquad t\ge 0\end{array} \right.
\end{equation}
where $\varepsilon\in [0,1]$ is a parameter. Solutions of (\ref{del-f-lam}) will be understand as mild solutions of (\ref{A-F-lam2}).
\begin{Th}
Let $g_0\colon \mathbb{R} \to \mathbb{R}$ be given by
\[g_0(y):=\int_0^T\!\!\!\int_\Omega f(t, x,  y) \,d x d t \qquad\mathrm{for}\quad y\in\mathbb{R}.\]
If real numbers $a$ and $b$ are such that $a < b$ and \ $g_0(a)\cdot g_0(b) < 0$, then there is $\varepsilon_0 > 0$ such that for $\varepsilon\in(0,\varepsilon_0]$, the problem (\ref{del-f-lam}) admits a solution.
\end{Th}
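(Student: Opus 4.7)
The plan is to verify the hypotheses of Corollary \ref{cor-aver} for the abstract reformulation \eqref{A-F-lam2}. First I would check the structural requirements of Section 3 on $A$ and $F$. Since $\Omega\subset\mathbb{R}^n$ is open, bounded and connected with $C^1$ boundary, the Neumann Laplacian $-A$ is self-adjoint, generates a compact $C_0$ semigroup on $X=L^2(\Omega)$, and its spectrum is a real, nonnegative sequence whose smallest eigenvalue $0$ has eigenspace equal to the constant functions; hence $N := \mathrm{Ker}\, A = \mathbb{R}\cdot\mathbf{1}_\Omega$ is one-dimensional. Since $\sigma_p(A)\subset \mathbb{R}$, Remark \ref{ker-a}(b) gives (A1). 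For (A2), take $M := N^\perp = \{v\in X \,:\, \int_\Omega v\, dx = 0\}$, which is closed and $S_A(t)$-invariant by self-adjointness. Conditions (F1) and (F2) for $F$ are inherited from the Lipschitz and boundedness assumptions (a), (b) on $f$.

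Next I would compute $g\colon N \to N$ explicitly using the identification $\mathbb{R}\ni y \leftrightarrow y\cdot \mathbf{1}_\Omega \in N$. The projection $P\colon X\to N$ is $Pv = \nu(\Omega)^{-1}\bigl(\int_\Omega v\,dx\bigr)\mathbf{1}_\Omega$, so for $u = y\cdot\mathbf{1}_\Omega$ one has $F(s,u)(x)=f(s,x,y)$ and
\[
g(y\cdot\mathbf{1}_\Omega) = \int_0^T P F(s,y\cdot\mathbf{1}_\Omega)\, ds = \frac{1}{\nu(\Omega)}\left(\int_0^T\!\!\int_\Omega f(s,x,y)\,dx\,ds\right)\mathbf{1}_\Omega = \frac{g_0(y)}{\nu(\Omega)}\mathbf{1}_\Omega.
\]
Continuity of $g_0$ follows from continuity and boundedness of $f$ via dominated convergence. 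Under the identification of $N$ with $\mathbb{R}$, the mapping $g$ is therefore a positive scalar multiple of $g_0$, sharing its zeros, signs and Brouwer degree.

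Finally I would apply Corollary \ref{cor-aver}. Set $U := \{y\cdot\mathbf{1}_\Omega \,:\, y\in(a,b)\}\subset N$, which is open and bounded with $\partial_N U$ corresponding to $\{a,b\}$; by hypothesis $g_0(a)\cdot g_0(b)<0$, so $g$ does not vanish on $\partial_N U$. Since $g_0$ changes sign between $a$ and $b$, the one-dimensional Brouwer degree satisfies $\mathrm{deg_B}(g, U) = \mathrm{sgn}\, g_0(b) \in \{+1,-1\}$, and in particular is nonzero. Choose any open bounded $V\subset M$ with $0\in V$, for instance $V := \{v\in M \,:\, \|v\|<1\}$. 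All hypotheses of Corollary \ref{cor-aver} are then met, and I obtain $\varepsilon_0\in(0,1)$ such that \eqref{A-F-lam2}, equivalently \eqref{del-f-lam}, admits a $T$-periodic mild solution for every $\varepsilon\in(0,\varepsilon_0]$. The only mildly delicate step is the explicit identification of $g$ with a positive multiple of $g_0$; the remaining verifications are standard consequences of self-adjointness and boundedness.
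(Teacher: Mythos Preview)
Your proposal is correct and follows essentially the same route as the paper: verify (A1) via Remark~\ref{ker-a}(b) using that $\sigma_p(A)\subset\mathbb{R}$, take $M=N^\perp$ for (A2), identify $g$ on the one-dimensional kernel $N=\mathbb{R}\cdot\mathbf{1}_\Omega$ with $\nu(\Omega)^{-1}g_0$, choose $U$ corresponding to $(a,b)$ and a ball $V\subset M$, and conclude by Corollary~\ref{cor-aver} since the sign change forces $\mathrm{deg_B}(g,U)\neq 0$. The only cosmetic difference is that you record $\mathrm{deg_B}(g,U)=\mathrm{sgn}\,g_0(b)$ explicitly, whereas the paper simply notes the degree is nonzero.
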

\begin{proof}
Since the spectrum of $A$ is real, condition (A1) is satisfied as a consequence of Remark \ref{ker-a}. It is known that $-A$ generates a compact $C_0$ semigroup on $X$ and $N := \mathrm{Ker}\, A$ is a one dimensional space. Furthermore, if we take $M:=\mathrm{Im}\, A$, then $M = N^\bot$ and hence $A$ satisfies also condition (A2). Let $P\colon X\to X$ be the orthogonal projection onto $N$ given by
\[P(u) := \frac{1}{\nu(\Omega)}(u,e)\cdot e \qquad\mathrm{for}\quad u\in X\] where $e\in L^2(\Omega)$ represents the constant equal to $1$ function and $\nu$ stands for the Lebesgue measure. Set
$U :=\{s\cdot e \ | \ s\in (a,b)\}$, $V :=\{u\in N^{\bot} \ | \ \|u\| < 1\}$ and let $g\colon N \to N$ be defined by $$g(u):=\int_0^T PF(t, u) \,d t \qquad\mathrm{for}\quad u\in N.$$ Then
$$g_0(y) = \nu(\Omega) \cdot K^{-1}(g(K(y)))\qquad\mathrm{for}\quad y\in\mathbb{R},$$
where $K\colon \mathbb{R} \to N$ is the linear homeomorphism given by $K(y) := y\cdot e$.
Since $g_0(a)\cdot g_0(b) < 0$, we have $\mathrm{deg_B}(g,U) = \mathrm{deg_B}(g_0, (a,b)) \neq 0$ and hence, by Corollary \ref{cor-aver}, there is $\varepsilon_0 \in (0,1)$ such that, for $\varepsilon\in (0,\varepsilon_0]$, problem (\ref{del-f-lam}) admits a solution as desired.
\end{proof}

\subsubsection*{Differential operator with the Dirichlet boundary conditions}

Suppose that $a^{ij} = a^{ji}\in C^1(\overline{\Omega})$ for $1\le i,j\le n$ and let $\theta > 0$ be such that $$a^{ij}(x)\xi_i \xi_j \ge \theta |\xi|^2 \qquad\text{for}\quad \xi = (\xi_1,\xi_2, \ldots, \xi_n)\in \mathbb{R}^n, \quad x\in\Omega.$$ We assume that $A \colon D(A) \to X$ is a linear operator given by the formula
\begin{align*}
D(A) & := \bigg\{u\in H^1_0(\Omega) \ | \ \text{ there is }g\in L^2(\Omega) \text{ such that } \\
& \hspace{30mm}\left. \int_\Omega a^{ij}(x)D_iu D_j h \ dx = \int_\Omega g h \ dx \ \text{ for } \  h\in H^1_0(\Omega)\right\}, \\
A u & := g, \text{ where } g \text{ is as above}.
\end{align*}
It is well known that $-A$ is self-adjoint and generates a compact $C_0$ semigroup on $X=L^2(\Omega)$. Let $\lambda_1 < \lambda_2 < \ldots < \lambda_k < \ldots $ be the sequence of distinct eigenvalues of $A$. We are concerned with a periodic parabolic problem of the form
\begin{equation}\label{A-elip}
\left\{ \begin{array}{ll}
u_t = D_i (a^{ij}D_ju) + \lambda_k u + f(t, x, u) & \qquad\text{in}\quad  (0,+\infty)\times\Omega \\
u(t,x) = 0 & \qquad\text{on}\quad [0,+\infty)\times\partial\Omega \\
u(t,x) = u(t + T,x) & \qquad\text{in}\quad [0,+\infty)\times\Omega,
\end{array} \right.
\end{equation}
where $\lambda_k$ is $k$-th eigenvalue of $A$ and $f\colon [0,+\infty)\times\Omega\times\mathbb{R}\to \mathbb{R}$ is as above. We write problem (\ref{A-elip}) in the abstract form
\begin{equation*}
\left\{ \begin{array}{ll}
\dot u(t) = - A u(t) + \lambda_k u(t) +  F(t,u(t)), & \qquad t > 0\\
u(t) = u(t + T) & \qquad t\ge 0 \end{array} \right.
\end{equation*}
where $F:[0,+\infty)\times X \to X$ is given by the formula (\ref{row3}). An immediate consequence of Corollary \ref{th-reso-adjoint} is the following
\begin{Th}
Suppose that $f \colon  [0,+\infty)\times\Omega\times\mathbb{R} \to \mathbb{R}$ is such that:
\begin{equation*}
\int_0^T \!\!\!\int_{\{u>0\}} f_+(t,x)u(x) \,d x dt + \int_0^T \!\!\!\int_{\{u<0\}} f_-(t,x) u(x) \,d x dt > 0,
\end{equation*}
for any $u\in \mathrm{Ker} \, A$ with $\|u\| = 1$, or
\begin{equation*}
\int_0^T \!\!\! \int_{\{u>0\}} f_+(t,x)u(x) \,d x dt + \int_0^T \!\!\! \int_{\{u<0\}} f_-(t,x) u(x) \,d x dt < 0,
\end{equation*}
for any $u\in \mathrm{Ker} \, A$ with $\|u\| = 1$. Then the problem (\ref{A-elip}) admits a $T$-periodic mild solution.
\end{Th}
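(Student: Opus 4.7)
The plan is to recast the parabolic boundary value problem (\ref{A-elip}) in the abstract form (\ref{A-eps-res}) and invoke Corollary \ref{th-reso-adjoint} directly. There is essentially no new analysis to perform; the proof is a matter of checking that the hypotheses of the abstract theory are met in the PDE setting.

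First I would verify the abstract framework. The operator $A$ associated with $-D_i(a^{ij}D_j\,\cdot\,)$ under Dirichlet boundary conditions is self-adjoint on $X = L^2(\Omega)$ (by the symmetry $a^{ij} = a^{ji}$ and the standard bilinear form construction). Uniform ellipticity together with $a^{ij}\in C^1(\overline{\Omega})$ ensures that $-A$ generates a compact $C_0$ semigroup $\{S_A(t)\}_{t\ge 0}$ on $X$, and $\sigma(A)$ consists of the real eigenvalues $\lambda_1 < \lambda_2 < \ldots$ used in the statement. Since $A$ is self-adjoint, $\mathrm{Ker}\,(A - \lambda_k I) = \mathrm{Ker}\,(A^* - \lambda_k I)$ is automatic, and the third equality in (A3), namely $\mathrm{Ker}\,(A - \lambda_k I) = \mathrm{Ker}\,(I - e^{\lambda_k T} S_A(T))$, is precisely (\ref{re-ker}) recorded just before Corollary \ref{th-reso-adjoint}. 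Hence (A3) holds.

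Next I would check that the Nemytskii operator $F$ defined by (\ref{row3}) meets the standing assumptions. Condition (a) forces $F$ to be bounded from $[0,+\infty)\times X$ into $X$; condition (b) gives, via the pointwise estimate $|F(t,u)(x) - F(t,v)(x)|\le L|u(x) - v(x)|$ and the Cauchy--Schwarz/Fubini argument, global Lipschitz continuity $\|F(t,u) - F(t,v)\|\le L\|u - v\|$ uniformly in $t$, which yields (F1) and (F2); condition (c) gives the $T$-periodicity in time that is compatible with the periodic problem (\ref{A-eps-res}). Thus the translation-along-trajectories operator $\Psi_T$ is well defined and completely continuous by Theorem \ref{tw-exi-con-comp2}.

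Finally I would apply Corollary \ref{th-reso-adjoint}. Either of the strict Landesman--Lazer inequalities (\ref{lazer1}) or (\ref{lazer2}) trivially implies the nonvanishing hypothesis (\ref{lazer}) of that corollary (on the unit sphere of the finite-dimensional space $N_{\lambda_k} = \mathrm{Ker}\,(\lambda_k I - A)$, by continuity and compactness the quantity is in fact bounded away from $0$). Corollary \ref{th-reso-adjoint} then produces $R>0$ with
\[
\mathrm{deg_{LS}}(I - \Psi_T, B(0,R)) = (-1)^{d_k}\,\mathrm{deg_B}(g, B(0,R)\cap N_{\lambda_k}),
\]
and Proposition \ref{th-guid-fun} evaluates the right-hand Brouwer degree to $\pm 1$ under either (\ref{lazer1}) or (\ref{lazer2}). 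In particular $\mathrm{deg_{LS}}(I - \Psi_T, B(0,R)) = \pm 1 \neq 0$, so by the existence property of the Leray--Schauder degree $\Psi_T$ has a fixed point in $B(0,R)$, which corresponds to a $T$-periodic mild solution of (\ref{A-elip}). There is no real obstacle here; the only point requiring care is the correct identification of $A$ as a self-adjoint generator of a compact semigroup so that (A3) and the hypotheses of Corollary \ref{th-reso-adjoint} are available for free, after which the theorem is immediate.
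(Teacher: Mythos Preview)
Your proposal is correct and matches the paper's approach exactly: the paper simply records that the theorem is ``an immediate consequence of Corollary \ref{th-reso-adjoint}'' after rewriting (\ref{A-elip}) in the abstract form (\ref{A-eps-res}), and you have spelled out precisely the routine verifications (self-adjointness of $A$, compactness of the semigroup, (A3) via (\ref{re-ker}), and the properties of the Nemytskii operator $F$) that make this invocation legitimate. The only cosmetic point is that the Landesman--Lazer conditions should be read over $N_{\lambda_k}=\mathrm{Ker}\,(A-\lambda_k I)$ rather than $\mathrm{Ker}\, A$ as literally printed in the statement, which you have implicitly corrected.
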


\noindent {\footnotesize {\em Acknowledgements.} The author wishes to thank Prof. W. Kryszewski and Dr. A. \'Cwiszewski for helpful comments and suggestions, which raised the quality of this work.}

\end{document}